\newcommand\reallywidehat[1]{%
	\savestack{\tmpbox}{\stretchto{%
			\scaleto{%
				\scalerel*[\widthof{\ensuremath{#1}}]{\kern.1pt\mathchar"0362\kern.1pt}%
				{\rule{0ex}{\textheight}}%WIDTH-LIMITED CIRCUMFLEX
			}{\textheight}% 
		}{2.4ex}}%
	\stackon[-6.9pt]{#1}{\tmpbox}%
}
\newlength{\defbaselineskip}
\newcommand{\setlinespacing}[1]%
{\setlength{\baselineskip}{#1 \defbaselineskip}}
\numberwithin{equation}{section}
\newtheorem{thm}{Theorem}[section]
\newtheorem{lem}[thm]{Lemma}
\newtheorem{prop}[thm]{Proposition}
\theoremstyle{definition}
\theoremstyle{remark}
\newtheorem{rem}[thm]{Remark}
\numberwithin{equation}{section}
\newcommand{\lgl}{\langle}
\newcommand{\rgl}{\rangle}
\begin{document}
	\pagenumbering{arabic}\setcounter{page}{1}

\title[Talbot effect]
{Talbot effect for the third order Lugiato-Lefever equation}
	
\author[Gunwoo Cho, Seongyeon Kim and Ihyeok Seo]{Gunwoo Cho, Seongyeon Kim and Ihyeok Seo}

\subjclass[2020]{Primary: 35B10; Secondary: 35B45}
\keywords{Talbot effect, Pattern formation, Lugiato-Lefever equation, Smoothing estimate}

\address{Department of Mathematics, Sungkyunkwan University, Suwon 16419, Republic of Korea}
\email{ihseo@skku.edu}
\email{gwcho@skku.edu}
 
\address{Department of Mathematics Education, Jeonju University, Jeonju 55069, Republic of Korea}
\email{sy\_kim@jj.ac.kr}

\begin{abstract}
We discuss the Lugiato-Lefever equation and its variant with third-order dispersion, which are mathematical models used to describe how a light beam forms patterns within an optical cavity. It is mathematically demonstrated that the solutions of these equations follow the Talbot effect, which is a phenomenon of periodic self-imaging of an object under certain conditions of diffraction. The Talbot effect is regarded as the underlying cause of pattern formation in optical cavities.
\end{abstract}
	
\maketitle
	
\section{Introduction}\label{sec1}
In 1987, L. Lugiato and R. Lefever \cite{LL} discovered an intriguing behavior of light beams in an optical cavity that involves the interplay between dispersion and diffraction. Specifically, they observed that the beam forms stationary hexagonal patterns in the transverse direction, while exhibiting Kerr frequency combs along the longitudinal direction. This phenomenon can be described by the Lugiato-Lefever equation,
	\begin{equation}\label{SLL}
		\begin{cases}
			\partial_t u=i\alpha\partial_x^2u-(1+i\theta)u+i|u|^2u+u_0,\\
			u(0,x)=u_0(x),	
		\end{cases}
	\end{equation} 
	where $\alpha$ and $\theta$ are real parameters.
To simulate the pattern formation predicted by this equation, various methods have been proposed, including those presented in \cite{GC} and \cite{SFTLL}.

The formation of patterns is closely connected to the Talbot effect, which arises from the diffraction of light waves (see, for example, \cite[Chap. 3]{VM}). Discovered by H. F. Talbot \cite{T1} in 1836, the effect occurs when white light passes through a diffraction grating, resulting in the recovery of the grating pattern at regular intervals known as the Talbot distance ($d_T$). This distance can be calculated using the formula $d_\mathrm{T} = a^2/\lambda$ by Rayleigh \cite{R1}, where $a$ is the grating spacing and $\lambda$ is the wavelength of the light. At rational multiples of $d_T$, the pattern appears as a finite linear combination of the grating pattern. In this study, we aim to mathematically demonstrate the Talbot effect for the Lugiato-Lefever equation (equation \eqref{TLL}) with third-order dispersion, as well as for equation \eqref{SLL}.

The Talbot effect has been the subject of study in a series of papers by Berry and his collaborators \cite{B,BK,BL,BMS}. In particular, in \cite{BK}, they considered the linear Schr\"odinger equation and demonstrated that the solution at rational times $t\in d_T\mathbb{Q}$ is a linear combination of a finite number of translates of its initial data. This phenomenon is sometimes referred to as \textit{dispersive quantization} in the literature. They also showed that the solution at irrational times $t\notin d_T\mathbb{Q}$ has a fractal profile that is nowhere differentiable.

More rigorous mathematical works on this topic were initiated by Oskolkov \cite{O}. He studied a large class of linear dispersive equations on $\mathbb{T}=\mathbb{R}/2\pi\mathbb{Z}$ with initial data of bounded variation. The solution is a continuous function that is nowhere differentiable if $t\notin \pi \mathbb Q$. Furthermore, if the initial data is continuous, then the solution in space-time is also continuous. However, if $t\in\pi \mathbb Q$ and the initial data contains discontinuities, the solution is necessarily discontinuous. This was further investigated in \cite{KR,R2,T3}, where Rodnianski \cite{R2} proved that the graphs of the real and imaginary parts of the solution to the linear Schr\"odinger equation have an upper Minkowski dimension of $3/2$ at almost all irrational times $t\notin \pi \mathbb Q$.

In \cite{ZWZX}, the Talbot effect and fractality were observed experimentally in a nonlinear setting, while in \cite{ch-ol14}, they were observed numerically. In \cite{ET1,CET,ES}, some of these effects were rigorously studied, with a key ingredient being a smoothing estimate on the Bourgain spaces for the nonlinearity.

The Lugiato-Lefever equation \eqref{SLL} can be seen as a damped and forced cubic nonlinear Schrödinger equation, but obtaining such estimates becomes more cumbersome due to the forcing term that appears in our case
(see Section \ref{sec3}).
More generally, we are concerned here with the Lugiato-Lefever equation with third-order dispersion:
\begin{equation}\label{TLL}
		\begin{cases}
			\partial_t u=\beta\partial_x^3 u+i\partial_x^2u-(1+i\theta)u+i|u|^2u+u_0,\\
			u(0,x)=u_0(x),
		\end{cases}	
	\end{equation}
where $x\in\mathbb{T}=\mathbb{R}/2\pi\mathbb{Z}$, $t\in\mathbb{R}$, and $\beta,\theta\in\mathbb{R}$. 
We always assume $\beta\in\mathbb{Z}$ technically, and $\theta=0$ just for convenience.
It is also worth noting that the effects of third-order dispersion are receiving increasing attention for the study of cavity solitons associated with Kerr frequency combs \cite{P}.

Our results can be summarized in two parts. Firstly, in Theorem \ref{Tal}, we establish dispersive quantization. The key observation here is that the behavior of the solution to \eqref{TLL} differs significantly between rational and irrational times, depending on the Talbot distance $d_T=\pi$ in our setting. In other words, the solution does not distribute uniformly but rather forms quanta. Secondly, in Theorem \ref{t:dim}, we demonstrate that the solution exhibits fractal nature at irrational times, quantified by calculating the upper Minkowski dimension of the graph of the solution.
	
	\begin{thm}\label{Tal}
Let $u$ be a solution to \eqref{TLL} with initial data $u_0\in BV$.
If $t\notin\pi\mathbb{Q}$, then the solution $u(t,x)$ is a continuous function of $x$.
On the other hand, if $t\in\pi\mathbb{Q}$ and $u_0$ has at least one discontinuity on $\mathbb T$, then $u(t,x)$ is a bounded function that necessarily contains at most countably many discontinuities.
However, if $u_0$ is continuous, then $u(t,x)$ is jointly continuous in both temporal and spatial variables. 
	\end{thm}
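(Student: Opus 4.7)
The plan is to split $u = u_L + u_N$ into the linear evolution of the initial datum and the Duhamel remainder, and to treat the two summands by completely different techniques: a discrete-Fourier/Oskolkov-type analysis for the linear part that produces the rational-versus-irrational dichotomy, and a trilinear smoothing estimate in Bourgain spaces for the remainder that shows it is jointly continuous in $(t,x)$.

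Set $L = \beta\partial_x^3 + i\partial_x^2 - 1$ and write $u_L(t) = e^{tL}u_0$ together with
$$u_N(t,x) = \int_0^t e^{(t-s)L}\bigl(i|u(s,\cdot)|^2 u(s,\cdot) + u_0\bigr)(x)\,ds,$$
so that $u = u_L + u_N$. On the Fourier side,
$$u_L(t,x) = e^{-t}\sum_{k\in\mathbb{Z}} \hat u_0(k)\,e^{-it(\beta k^3+k^2)}\,e^{ikx}.$$
At $t = p\pi/q$ with $\gcd(p,q)=1$, the hypothesis $\beta\in\mathbb{Z}$ makes the multiplier $e^{-ip\pi(\beta k^3+k^2)/q}$ periodic in $k$ (a direct check shows $\beta(k+2q)^3 + (k+2q)^2 \equiv \beta k^3 + k^2 \pmod{2q}$); expanding this periodic factor in a discrete Fourier series expresses $u_L(p\pi/q,\cdot)$ as a finite linear combination of translates of $u_0$, so this trace of $u_L$ is bounded and its only possible discontinuities are translates of the (at most countable) jump set of $u_0$. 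For $t\notin\pi\mathbb{Q}$ one applies Oskolkov's classical approximation argument \cite{O}, which, together with the $O(|k|^{-1})$ decay of the Fourier coefficients of a BV function, yields continuity of $u_L(t,\cdot)$ in $x$; if in addition $u_0$ is continuous, a refinement of the same argument yields joint continuity of $u_L$ in $(t,x)$. The smooth damping factor $e^{-t}$ affects none of these properties.

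For the remainder $u_N$ the objective is to prove $u_N \in C(\mathbb{R};H^s(\mathbb{T}))$ for some $s>1/2$, so that the Sobolev embedding $H^s(\mathbb{T})\hookrightarrow C(\mathbb{T})$ promotes $u_N$ to a jointly continuous function of $(t,x)$. The forcing contribution $\int_0^t e^{(t-s)L}u_0\,ds$ has Fourier multiplier $(1-e^{-t(1+i(\beta k^3+k^2))})/(1+i(\beta k^3+k^2))$, which decays at least like $|k|^{-2}$ and therefore gains two derivatives over $u_0$, causing no difficulty. For the cubic term one invokes the trilinear smoothing estimate in $X^{s,b}$ spaces adapted to the phase $\beta k^3+k^2$ that is established in Section \ref{sec3}; combined with the local-well-posedness bound for $u$, this yields the required $H^s$-regularity of $u_N$ and hence its joint continuity.

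The main obstacle will be the trilinear smoothing estimate itself. Because the symbol $\beta k^3+k^2$ is not homogeneous, the resonance function $\Phi = (\beta k^3+k^2) - (\beta k_1^3+k_1^2) + (\beta k_2^3+k_2^2) - (\beta k_3^3+k_3^2)$ with $k = k_1-k_2+k_3$ is a genuinely cubic polynomial in the frequencies, and the needed divisor-type bounds on the number of lattice points with a prescribed value of $\Phi$ require careful number-theoretic estimates. A further wrinkle, emphasised in the introduction, is that the constant forcing $u_0$ must be accommodated inside the fixed-point argument, so one cannot simply quote a smoothing estimate written for the standard cubic NLS. Once the estimate is in place, the three cases of Theorem \ref{Tal} follow immediately: for $t\notin\pi\mathbb{Q}$ both $u_L$ and $u_N$ are continuous in $x$; for $t\in\pi\mathbb{Q}$ the linear part contributes the (at most countable) discontinuities while $u_N$ remains continuous; and when $u_0$ is continuous and BV, both summands are jointly continuous, yielding the joint continuity of $u$.
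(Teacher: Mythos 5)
Your overall architecture (split into linear flow plus Duhamel remainder, treat the former by Oskolkov's theorem and the latter by a nonlinear smoothing estimate) is the same as the paper's, but there is a genuine gap at the central step: the smoothing estimate you invoke for your remainder $u_N$ is false as stated. The cubic nonlinearity on the torus contains a resonant part: writing $\widehat{|u|^2u}(k)=2\|\hat u\|_{l^2}^2\hat u(k)-|\hat u(k)|^2\hat u(k)+\widehat{R(u)}(k)$ with $R$ supported on $k_1\ne k$, $k_2\ne k_1$, the first term equals $4\pi\|u(t)\|_{L^2}^2\,\hat u(t,k)$ and has exactly the regularity of $u(t)$ itself, namely $H^{\frac12-}$ for BV data; integrating it against the group gains no spatial derivatives. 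Consequently your $u_N$ does not lie in $C_tH^s_x$ for any $s>1/2$ in general. One can see the failure concretely: the paper shows $u=e^{4\pi i\int_0^t\|u\|_{L^2}^2ds}\,e^{-t}e^{t(\partial_x^3+i\partial_x^2)}u_0+\mathcal N$ with $\mathcal N$ the smoother object, so your $u_N=u-u_L$ equals $\bigl(e^{4\pi i\int_0^t\|u\|_{L^2}^2ds}-1\bigr)e^{-t}e^{t(\partial_x^3+i\partial_x^2)}u_0+\mathcal N$, which at rational times inherits every jump of $e^{t(\partial_x^3+i\partial_x^2)}u_0$ as soon as the phase $4\pi\int_0^t\|u\|_{L^2}^2ds$ is not a multiple of $2\pi$. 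So $u_N(t,\cdot)$ is not even continuous in $x$, let alone in $H^{\frac12+}$.

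The missing idea is the resonance removal via the gauge transformation $u=e^{4\pi i\int_0^t\|v\|_{L^2}^2ds}v$, i.e.\ pulling the term $2\|\hat u\|_{l^2}^2\hat u$ into the linear flow as a time-dependent scalar phase. The Duhamel term of the gauged equation then contains only $\rho(u)$ with $\widehat{\rho(u)}(k)=-|\hat u(k)|^2\hat u(k)$ (which gains derivatives by the elementary inclusion $l^2\subset l^6$), the genuinely non-resonant trilinear piece $R(u)$ (which smooths precisely because the resonance factor $(k-k_1)(k_1-k_2)(3k+3k_2+2)$ is nonzero on its frequency support --- this is where $k_1\ne k$, $k_2\ne k_1$ is used), and the forcing. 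The linear part to which Oskolkov's theorem is applied must then carry the extra factor $e^{-t+4\pi i\int_0^t\|u\|_{L^2}^2ds}$, which is harmless since it is a continuous nonvanishing scalar function of $t$ alone. Your discussion of the trilinear estimate correctly anticipates the role of the resonance function, but without first excising the resonant frequencies $k_1=k$ and $k_2=k_1$ that function vanishes identically on a large set and no divisor-counting argument can rescue the estimate. The remaining ingredients of your outline (periodicity of the multiplier modulo $2q$ using $\beta\in\mathbb Z$, Oskolkov for irrational times, the explicit multiplier bound for the constant forcing) are consistent with the paper.
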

	
	\begin{thm}\label{t:dim}
  Let $u$ be a solution to \eqref{TLL} with initial data $u_0\in BV$. Suppose that
$$\sigma_0:= \sup\{\sigma\in \mathbb R\colon u_0\in H^\sigma\}<5/8.$$
(Recall that if $u_0\in BV$, then $u_0\in H^{\frac12+}$, and so $\sigma_0\ge 1/2$.)
Then, for almost all $t\notin\pi \mathbb Q$, the upper Minkowski dimension of the graphs of $\mathrm{Re}u(t,\cdot)$ and $\mathrm{Im}u(t,\cdot)$ lies in the interval $\left[\frac94-2\sigma_0, \frac74\right]$.
	\end{thm}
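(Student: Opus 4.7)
The plan is to use the Duhamel formula to split $u$ into a linear dispersive component and a remainder smooth enough to be absorbed, and then to compute the Minkowski dimension of the remaining linear evolution by a Fourier/Diophantine analysis, following the template established by Chousionis--Erdo\u{g}an--Tzirakis \cite{CET} and Erdo\u{g}an--Shakan \cite{ES}, suitably adapted to the mixed symbol $-k^2-\beta k^3$.

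\textbf{Step 1 (Reduction to the linear flow).} Set $L:=\beta\partial_x^3+i\partial_x^2-1$ and $v(t,x):=e^{tL}u_0(x)$. The Duhamel formula reads
\[
u(t,x) = v(t,x) + L^{-1}\bigl(e^{tL}-\mathrm{id}\bigr)u_0(x) + i\int_0^t e^{(t-s)L}(|u|^2u)(s,x)\,ds.
\]
Since the symbol of $L$ is $-1-ik^2-i\beta k^3$, the operator $L^{-1}$ gains three derivatives, so the forcing piece lies in $H^{\sigma_0+3}\subset C^1$. The Bourgain-space smoothing estimate that underlies Theorem~\ref{Tal} (developed in Section~\ref{sec3}) would place the cubic Duhamel term in $H^{3/2+}\subset C^1$ for a.e. $t$. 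Because the shear $(x,y)\mapsto(x,y+g(x))$ is bi-Lipschitz when $g\in C^1$, it preserves Minkowski dimension of planar graphs; thus
\[
\dim_M\mathrm{graph}(\mathrm{Re}\,u(t,\cdot)) = \dim_M\mathrm{graph}(\mathrm{Re}\,v(t,\cdot)),
\]
and analogously for $\mathrm{Im}$, reducing the problem to the linear evolution.

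\textbf{Step 2 (Upper bound $7/4$).} From $v(t,x) = e^{-t}\sum_{k}\hat u_0(k)\,e^{i(kx - k^2t - \beta k^3 t)}$ and $|\hat u_0(k)|\lesssim\langle k\rangle^{-1}$ (since $u_0\in BV$), I would derive spatial $C^{1/4-}$-regularity for a.e. $t\notin\pi\mathbb Q$ by bounding dyadic frequency blocks via Weyl differencing: the exponential sums $\sum_{|k|\sim N}e^{i(kx-\beta t k^3)}$ are $O(N^{3/4+\epsilon})$ whenever $\beta t/(2\pi)$ avoids a suitable $O(N^{-2})$-neighborhood of rationals, which holds on a full-measure set by Khintchine's theorem. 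The quadratic phase $-k^2t$ is absorbed by a Gauss-sum estimate, for which $\beta\in\mathbb Z$ ensures the compatibility condition $t\notin\pi\mathbb Q$ is inherited. The standard inequality $\dim_M(\mathrm{graph}(f))\le 2-\gamma$ for $f\in C^\gamma$ completes Step~2.

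\textbf{Step 3 (Lower bound $9/4-2\sigma_0$).} By definition of $\sigma_0$, there exists a dyadic sequence $N_j\to\infty$ with $\sum_{|k|\sim N_j}|\hat u_0(k)|^2\gtrsim N_j^{-2\sigma_0}$. Using the dispersive cancellation established in Step~2 to isolate a single dyadic block $|k|\sim N_j$ in $v(t,\cdot)$, I would apply the Kahane/Deliu--Jawerth-type lower bound for the box-counting dimension of graphs of Fourier series used in \cite{CET}: the $L^2$-oscillation of $v(t,\cdot)$ on intervals of length $\delta\sim N_j^{-3/4}$ is bounded below by $N_j^{-\sigma_0}$ on a set of $x$ of positive measure, and a direct counting argument converts this into
\[
N_\delta\bigl(\mathrm{graph}(\mathrm{Re}\,v(t,\cdot))\bigr)\gtrsim\delta^{-(9/4-2\sigma_0)}
\]
for a.e. $t\notin\pi\mathbb Q$. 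Taking logarithms yields the claimed lower bound. The restriction $\sigma_0<5/8$ is precisely what makes this bound non-trivial (exceeding the trivial dimension~$1$).

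\textbf{Main obstacle.} The crux is the smoothing step: the time-independent forcing $u_0$ appearing in \eqref{TLL} prevents a direct contraction argument in $X^{s,b}$ spaces, since the forcing is not smoothed by the linear flow. A normal-form transformation, in the spirit of the Erdo\u{g}an--Tzirakis treatment of the periodic cubic NLS, must be adapted to both the third-order dispersion and the forcing in order to extract the gain of at least $1-\sigma_0$ derivatives needed to place the cubic Duhamel in $C^1$. Once this is secured, the Weyl-sum and Deliu--Jawerth ingredients of Steps~2 and~3 follow known templates, the only delicate adjustment being the combined phase $-k^2 t-\beta k^3 t$, whose Diophantine analysis is made uniform precisely by the hypothesis $\beta\in\mathbb Z$ together with $t\notin\pi\mathbb Q$.
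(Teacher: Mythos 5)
Your overall strategy (split off the linear evolution, show the nonlinear remainder is smoother, read off the dimension from the linear part) is the paper's strategy, and Steps 2--3 are essentially re-derivations of the black boxes the paper cites (Proposition \ref{linear} for the $C^{1/4-}$ regularity and Besov non-membership of $e^{itP(-i\partial_x)}g$, and the Deliu--Jawerth Lemma \ref{FDL}). But Step 1 has two genuine gaps. First, the decomposition you write is not the one for which the smoothing estimate holds. The cubic Duhamel term $i\int_0^t e^{(t-s)L}(|u|^2u)\,ds$ contains the resonant contribution $2\|\widehat u\|_{l^2}^2\widehat u(k)$ (see \eqref{res}), which is exactly as rough as $u$ itself and gains nothing; so this term is \emph{not} in $H^{3/2+}$, nor even in $H^{\sigma_0+}$. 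The paper must first perform the gauge transformation \eqref{trans} to move the resonance into the unimodular factor $e^{4\pi i\int_0^t\|u\|_{L^2}^2ds}$; only the gauged remainder $\mathcal N$ smooths (Proposition \ref{smoothing}), and correspondingly the "linear part" whose dimension you must compute is $e^{-t+4\pi i\int_0^t\|u\|_{L^2}^2ds}\,e^{(\partial_x^3+i\partial_x^2)t}u_0$, not $e^{tL}u_0$. You acknowledge a normal form is needed in your "Main obstacle" paragraph, but your Step 1 is written without it, and the conclusion you draw from it is false as stated.

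Second, even after the gauge transformation the smoothing gain is $a<\min\{2s,1\}<1$, so with $u_0\in H^{\sigma_0-}$, $\sigma_0<5/8$, one only gets $\mathcal N(t,\cdot)\in H^{3/2-}\subset C^{1-}$, never $C^1$. Your reduction via the shear $(x,y)\mapsto(x,y+g(x))$ requires $g$ Lipschitz to be bi-Lipschitz; a $C^{1-}$ (non-Lipschitz) shear need not preserve upper Minkowski dimension, so the identity $\dim_M\mathrm{graph}(\mathrm{Re}\,u)=\dim_M\mathrm{graph}(\mathrm{Re}\,v)$ does not follow. The paper avoids this entirely: for the upper bound it adds Hölder regularities ($C^{1/4-}+C^{1-}\subset C^{1/4-}$) and applies Lemma \ref{l:upper} to $u$ itself; for the lower bound it uses the Besov formulation of Lemma \ref{FDL}, which is stable under adding the perturbation because $\mathcal N\in B^{3/2-}_{1,\infty}\hookrightarrow B^{(2\sigma_0-\frac14)+}_{1,\infty}$ (here $2\sigma_0-\frac14<1$ is exactly where the hypothesis $\sigma_0<5/8$ enters, also ensuring the Deliu--Jawerth exponent stays below $1$). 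Your Step 3 numerology ($\delta\sim N^{-3/4}$, oscillation $\gtrsim N^{-\sigma_0}$) does not obviously reproduce the exponent $9/4-2\sigma_0$ and would need to be checked, but the Besov route makes that computation unnecessary.
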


The Talbot effect for the Lugiato-Lefever equations suggests an interesting research direction involving numerical investigations of the Talbot effect in related optical systems. For the reader's convenience, we refer to several numerical studies \cite{IMCSMS, ZZJL, ZDJZL, ZDKL} on optical systems. 
    
	\subsubsection*{Organization} 
 Firstly, in Section \ref{sec2}, we establish the local well-posedness of the Cauchy problem \eqref{TLL} in the space $H^s(\mathbb{T})$. In Section \ref{sec3-0}, we present the key smoothing estimate (Proposition \ref{smoothing}) for the Duhamel part of problem \eqref{TLL}, along with the relevant results on the free evolution part that we require. We then prove Theorems \ref{Tal} and \ref{t:dim}. The remaining sections are dedicated to proving the smoothing estimate.

	\subsubsection*{Notation}
We use the symbol $C$ to denote a positive constant, which may differ from one occurrence to another. Given $A, B > 0$, we write $A\lesssim B$ if $A\leq CB$, where $C>0$ is some constant. We also use the notation $A\approx B$ if $A\lesssim B$ and $B\lesssim A$. For a function $f$ defined on $\mathbb{T}=\mathbb{R}/2\pi\mathbb{Z}$, we define its Fourier transform by
\begin{equation*}
\widehat{f}(k) = \int_{0}^{2\pi} e^{-ikx}f(x) dx, \quad k\in\mathbb{Z},
\end{equation*}
and its space-time Fourier transform by
\begin{equation*}
\widetilde{f}(\tau,k) = \int_{\mathbb{R}} \int_{0}^{2\pi} e^{-i(\tau t+kx)} f(t,x) dx dt, \quad (\tau, k)\in \mathbb{R}\times \mathbb{Z}.
\end{equation*}
For every $s\geq 0$, we denote by $H^s$ the Sobolev space on $\mathbb{T}=\mathbb{R}/2\pi\mathbb{Z}$ equipped with the norm
\begin{equation*}
		\|f\|_{H^s} := \Big( \sum_{k\in\mathbb Z} \langle k\rangle^{2s} |\widehat f (k)|^2 \Big)^{1/2},
	\end{equation*}
where $\langle k\rangle=(1+|k|^2)^{1/2}$.

	\section{Local well-posedness}\label{sec2}
	In this section we establish a local well-posedness for the Cauchy problem \eqref{TLL} in $H^s(\mathbb{T})$.
	We first introduce the Bourgain space $X^{s,b}$, $s,b\in\mathbb{R}$, equipped with the norm $$\|f\|_{X^{s,b}(\mathbb{R}\times\mathbb{T})}:=\|\langle k\rangle^{s}\langle \tau+k^3+ k^2\rangle^{b}\widetilde{f}(\tau,k)\|_{L_\tau^2 l_k^2(\mathbb{R}\times\mathbb{Z})},$$
	and its restricted space $X^{s,b}_{\delta}$, $0<\delta\leq1$, with the corresponding norm
	\[	\|u\|_{X^{s,b}_{\delta}} = \inf\big\{\|w\|_{X^{s,b}} \colon w\vert_{[-\delta,\delta]\times\mathbb T}=u \big\}.\]
	The local well-posedness we will obtain is now stated as follows.
	
	\begin{thm}\label{wellp}
		Let $s\geq0$ and $1/2<b<5/8$. If $u_0\in H^s(\mathbb{T})$, then there exist $\delta>0$ and a unique solution $u \in C_t([-2\delta,2\delta];H_x^s(\mathbb{T}))\cap X_{2\delta}^{s,b}(\mathbb{R}\times\mathbb{T})$ with 
		\begin{equation}\label{prop2}
			\|u\|_{X_{2\delta}^{s,b}}\lesssim \|u_0\|_{H^{s}}.
		\end{equation}
	\end{thm}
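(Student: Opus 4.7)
My plan is to set up a contraction mapping in the restricted Bourgain space $X^{s,b}_{2\delta}$ after recasting \eqref{TLL} as a Duhamel integral equation. Let $W(t)$ denote the linear propagator associated with $\beta\partial_x^3+i\partial_x^2-1$ (with $\theta=0$), so that $\widehat{W(t)f}(k)=e^{(-i\beta k^3-ik^2-1)t}\widehat f(k)$. With a standard time cutoff $\psi\in C_c^\infty(\mathbb{R})$, $\psi\equiv 1$ on $[-1,1]$, $\mathrm{supp}\,\psi\subset[-2,2]$, and $\psi_\delta(t)=\psi(t/\delta)$, I would show that the truncated integral operator
\begin{equation*}
\Phi[u](t):=\psi(t)W(t)u_0+\psi_\delta(t)\int_0^t W(t-s)\bigl[i|u(s)|^2 u(s)+u_0\bigr]\,ds
\end{equation*}
is a contraction on the ball $\{u:\|u\|_{X^{s,b}}\le 2C\|u_0\|_{H^s}\}$ for $\delta$ small enough depending on $\|u_0\|_{H^s}$, and then take $u=\Phi[u]$ restricted to $[-2\delta,2\delta]$.

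The auxiliary linear estimates are standard in this framework. The homogeneous bound $\|\psi(t)W(t)u_0\|_{X^{s,b}}\lesssim\|u_0\|_{H^s}$ follows immediately from the definition of $X^{s,b}$ together with Plancherel applied to $\psi$. For $1/2<b<1$, the Duhamel estimate
\begin{equation*}
\Bigl\|\psi_\delta(t)\int_0^t W(t-s)F(s)\,ds\Bigr\|_{X^{s,b}}\lesssim \delta^{\kappa}\|F\|_{X^{s,b-1}}
\end{equation*}
holds with some $\kappa=\kappa(b)>0$; the time-independent forcing term $\int_0^t W(t-s)u_0\,ds$, once multiplied by $\psi_\delta$, contributes $\lesssim \delta^{\kappa'}\|u_0\|_{H^s}$ to $\|\cdot\|_{X^{s,b}}$ by an explicit Fourier computation (one uses $|i\beta k^3+ik^2+1|\ge 1$ to bound the resulting geometric sum in $s$).

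The heart of the matter, and in my view the main obstacle, is the trilinear estimate
\begin{equation*}
\bigl\||u|^2u\bigr\|_{X^{s,b-1}}\lesssim\|u\|_{X^{s,b}}^3 \qquad (s\ge 0,\ 1/2<b<5/8).
\end{equation*}
To prove it, I would decompose $u\bar u u$ dyadically on the Fourier side and analyze the resulting weighted convolution via the resonance function attached to the phase $\beta k^3+k^2$. Separating resonant and non-resonant interactions, the non-resonant part is controlled by Cauchy-Schwarz in $\tau$ together with a Bourgain-type counting lemma for integer lattice points on the level sets of this phase (the integrality of $\beta$ is crucial here to keep the resonance function integer-valued), while the resonant contribution is handled by a periodic $L^6$-type Strichartz estimate. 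The upper threshold $b<5/8$ arises precisely from the loss in the counting step when the cubic part of the phase dominates; this parallels the smoothing analysis carried out in \cite{ET1,CET,ES}.

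Granting the trilinear estimate, the contraction argument is mechanical. Combining the above yields
\begin{equation*}
\|\Phi[u]\|_{X^{s,b}}\le C\|u_0\|_{H^s}+C\delta^{\kappa}\|u\|_{X^{s,b}}^3+C\delta^{\kappa'}\|u_0\|_{H^s},
\end{equation*}
and a parallel difference estimate for $\|\Phi[u]-\Phi[v]\|_{X^{s,b}}$, from which a unique fixed point exists on the advertised ball once $\delta$ is small enough; the bound \eqref{prop2} is the resulting radius estimate. Uniqueness in $X^{s,b}_{2\delta}$ is immediate from the difference inequality applied to any two solutions, and the continuity $u\in C([-2\delta,2\delta];H^s_x)$ follows from the embedding $X^{s,b}_{2\delta}\hookrightarrow C_tH^s_x$ valid for $b>1/2$.
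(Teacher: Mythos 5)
Your overall architecture is exactly the paper's: Duhamel reformulation, a contraction on a ball in $X^{s,b}_{2\delta}$, the homogeneous bound $\|\psi(t)W(t)u_0\|_{X^{s,b}}\lesssim\|u_0\|_{H^s}$, a Duhamel estimate into $X^{s,b-1}$ with a $\delta$-power gain (the paper obtains the gain by first lowering the modulation exponent via $\|u\|_{X^{s,b}_\delta}\lesssim\delta^{b'-b}\|u\|_{X^{s,b'}_\delta}$ rather than by rescaling the cutoff, but these are interchangeable), and a separate bound for the constant-in-time forcing $u_0$, which the paper also handles by an explicit Fourier computation using the decay of $\widehat\eta$ and a convolution lemma. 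The embedding $X^{s,b}_{2\delta}\hookrightarrow C_tH^s_x$ and the uniqueness argument are likewise the same.

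The one place where you genuinely diverge is the only nontrivial ingredient, the trilinear estimate, and there you have left the work undone. You propose a dyadic resonance decomposition, a Bourgain-type lattice-counting lemma for the level sets of $k^3+k^2$, and a periodic $L^6$ Strichartz estimate; none of this is carried out, and the periodic $L^6$ bound for a third-order dispersion relation is itself a substantial claim (for KdV it holds only with an $\varepsilon$-loss). The paper avoids all of this: since $X^{s,-3/8}\subset X^{s,b'-1}$ for $b'\le 5/8$, it suffices by duality to bound $|\langle uvw,z\rangle_{L^2}|$ by $\|\mathcal J^s(uvw)\|_{L^{4/3}}\|\mathcal J^{-s}z\|_{L^4}$, distribute $\mathcal J^s$ with the fractional Leibniz rule, and invoke the $L^4(\mathbb T\times\mathbb R)$ Strichartz estimate $\|f\|_{L^4}\lesssim\|f\|_{X^{0,b}}$ for $b>1/3$ of Miyaji--Tsutsumi, applied with $b=3/8$. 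In particular your diagnosis of the threshold $b<5/8$ is off: it does not come from a loss in a counting step, but from the requirement $b'-1\le-3/8$ in the duality reduction, paired with $3/8>1/3$ needed for the $L^4$ estimate. So the proposal is structurally sound and matches the paper's skeleton, but its core estimate rests on an unexecuted and unnecessarily hard strategy; to complete it along the paper's lines you should replace the resonance analysis by the $L^4$-based argument just described.
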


	\begin{proof}
		By Duhamel's principle, we write the solution to the Cauchy problem \eqref{TLL} as  
		\begin{equation}\label{sol}
			\Phi(u):= e^{(\partial_x^3+i\partial_x^2-1)t}u_0+i\int_0^t e^{(\partial_x^3+i\partial_x^2-1)(t-t')}F(u)(t',\cdot)\,dt'
		\end{equation} 
		where $F(u)=i|u|^2u+u_0$.
		For some small $\delta>0$ and large $K>0$ which will be chosen later, we will show that $\Phi$ defines a contraction map on the set $$X:=\{u\in X_{2\delta}^{s,b}:\|u\|_{X_{2\delta}^{s,b}}\leq K\|u_0\|_{H^s}\}.$$
		To do so, we first present some basic properties of the Bourgain space, Lemmas \ref{Bour-Str} and \ref{lem2}. 
		The former can be found in \cite[Proposition 2.12]{T2} and \cite[Lemma 3.3]{ET2}. See also \cite[Lemma 2.11]{T2} for the latter.
		
		\begin{lem}\label{Bour-Str}
			Let $s\in\mathbb{R}$ and $1/2<b\leq 1$. 
			If we set $\eta$ to be a smooth function supported on $[-2,2]$ with $\eta=1$ on $[-1,1]$, then 
			\begin{gather}
				\| e^{(\partial_x^3+i\partial_x^2-1)t}f\|_{X^{s,b}_{\delta}} \leq C\|f\|_{H^{s}},\label{homo} \\ \left\|\eta(t)\int_{0}^t e^{(\partial_x^3+i\partial_x^2-1)(t-t')}F(t',\cdot)\, dt'\right\|_{X_{\delta}^{s,b}}\le C\|F\|_{X_{\delta}^{s,b-1}}\label{inhomo}
			\end{gather}	
			with a constant $C$ depending only on $b$.
		\end{lem}
		\begin{lem}\label{lem2}
			Let $s\in\mathbb{R}$ and $-1/2<b<b^\prime<1/2$. Then 
			\[	\|u\|_{X^{s,b}_{\delta}} \leq C\delta^{b^\prime-b}\|u\|_{X^{s,b^\prime}_{\delta}},\] with a constant $C$ depending only on $b$ and $b'$.
		\end{lem}
		
		Now we show that $\Phi$ is well-defined on $X$.
		By applying \eqref{homo} and \eqref{inhomo} to the homogeneous and Duhamel terms in \eqref{sol}, respectively, we have
		\begin{equation*}
			\|e^{(\partial_x^3+i\partial_x^2-1)t}u_0\|_{X_{2\delta}^{s,b}} \leq C\|u_0\|_{H^s}
		\end{equation*}
		and
		\begin{equation}\label{last}
			\Big\|\int_0^t e^{(\partial_x^3+i\partial_x^2-1)(t-t')}F(u)(\cdot,t')\,dt'\Big\|_{X_{2\delta}^{s,b}}
			\le C\big(\|u_0\|_{X_{2\delta}^{s,b-1}}+\||u|^2u\|_{X_{2\delta}^{s,b-1}}\big)
		\end{equation}
		if $1/2<b\leq1$.
		Using Lemma \ref{lem2}, the left-hand side of \eqref{last} is also bounded by
		\begin{equation}\label{last1}
			C\delta^{b'-b}(\|u_0\|_{X_{2\delta}^{s,b'-1}}+\||u|^2u\|_{X_{2\delta}^{s,b'-1}})
		\end{equation}
		whenever $1/2<b\leq1$ and $b<b'<\frac32$.
		To bound \eqref{last1} again with $C\|u_0\|_{H^s}$, we make use of the following proposition which will be proved at the end of this section:
		\begin{prop}\label{prop3} Let $s\geq0$. 
			If $0\leq b'\leq1$,
			then
			\begin{align}\label{force}
				\|u_0\|_{X_\delta^{s,b'-1}}&\lesssim\|u_0\|_{H^s},
			\end{align}
			and if $1/3<b'\leq5/8$
			\begin{align}\label{trilinear}
				\|uvw\|_{X_\delta^{s,b'-1}}\lesssim\|u\|_{X_\delta^{s,\frac38}}\|v\|_{X_\delta^{s,\frac38}}\|w\|_{X_\delta^{s,\frac38}}.
			\end{align}
			Here the implicit constants depend only on $b'$.
		\end{prop}
		Since we are assuming $1/2<b<5/8$ in Theorem \ref{wellp}, by applying Proposition \ref{prop3} to \eqref{last1}, we get 
		\begin{align*}	
			C\delta^{b'-b}(\|u_0\|_{X_{2\delta}^{s,b'-1}}+\||u|^2u\|_{X_{2\delta}^{s,b'-1}})
			&\leq C\delta^{b'-b}(\|u_0\|_{H^s} + \|u\|^3_{X_{2\delta}^{s,\frac38}})\\
			&\leq C\delta^{b'-b}(\|u_0\|_{H^s} + K^3\|u_0\|_{H^s}^3)
		\end{align*}
		where we used for the last inequality that $$\|u\|_{X_{2\delta}^{s,\frac38}}\leq\|u\|_{X_{2\delta}^{s,b}}\leq K\|u_0\|_{H^s}$$ 
		since $u\in X$.
		If we take $\delta$ small so that
		\begin{equation*}
			\delta^{b'-b}\leq \frac{1}{1+K^3\|u_0\|_{H^s}^2},
		\end{equation*}
		then finally
		\begin{equation}\label{bd}
			\|\Phi(u)\|_{X_{2\delta}^{s,b}}\le C\|u_0\|_{H^s}.
		\end{equation}
		Choosing $K$ bigger than $C$ here, we conclude 
		$\Phi(u)\in X_{2\delta}^{s,b}$ for $u\in X_{2\delta}^{s,b}$.
		
		Next we show that $\Phi$ is a contraction on $X$. Namely, if $u,v\in X$,
		\begin{equation} \label{con}
			\|\Phi(u)-\Phi(v)\|_{X_{2\delta}^{s,b}}\leq \frac{1}{2}\|u-v\|_{X_{2\delta}^{s,b}}.
		\end{equation}
		Using \eqref{inhomo} and Lemma \ref{lem2}, we have for $\frac12<b<b'<\frac32$ 
		\begin{align*}
			\|\Phi(u)-\Phi(v)\|_{X_{2\delta}^{s,b}}&\le C\left\||u|^2u-|v|^2v\right\|_{X_{2\delta}^{s,b-1}}\\
			&\le C\delta^{b'-b}\left\||u|^2u-|v|^2v\right\|_{X_{2\delta}^{s,b'-1}}.
		\end{align*}
		By applying \eqref{trilinear} here after using the following simple inequality 
		\begin{equation*}
			\big||u|^2u-|v|^2v\big|\lesssim \big(|u|^2+|v|^2\big)|u-v|,
		\end{equation*}
		we see that
		\begin{align*}
			C \delta^{b'-b}\left\||u|^2u-|v|^2v\right\|_{X_{2\delta}^{s,b'-1}}&\le C  \delta^{b'-b} \left\|\big(|u|^2+|v|^2\big)|u-v|\right\|_{X_{2\delta}^{s,b'-1}}\\
			&\leq C \delta^{b'-b}\big(\|u\|_{X_{2\delta}^{s,\frac38}}^2+\|v\|_{X_{2\delta}^{s,\frac38}}^2\big)\|u-v\|_{X_{2\delta}^{s,\frac38}}.
		\end{align*}
		Hence, we get
		\begin{align*}
			\|\Phi(u)-\Phi(v)\|_{X_{2\delta}^{s,b}}&\le C\delta^{b'-b} \big(\|u\|_{X_{2\delta}^{s,\frac38}}^2+\|v\|_{X_{2\delta}^{s,\frac38}}^2\big)\|u-v\|_{X_{2\delta}^{s,\frac38}}\\
			&\le C\delta^{b'-b}K^2\|u_0\|_{H^s}^2\|u-v\|_{X_{2\delta}^{s,b}}\\
			&\le \frac{CK^2\|u_0\|_{H^s}^2}{(1+K^3\|u_0\|_{H^s}^2)}\|u-v\|_{X_{2\delta}^{s,b}}.
		\end{align*}
		This implies \eqref{con} if we choose $K$ bigger than $2C$ here.
		
		Therefore, the local existence in Theorem \ref{wellp} follows by the contraction mapping principle and the embedding $X_{2\delta}^{s,b} \hookrightarrow C([-2\delta,2\delta];H^s)$.
		The inequality \eqref{prop2} also holds immediately from \eqref{bd}.
	\end{proof}
	
	\subsection*{Proof of Proposition \ref{prop3}}
	It remains to prove Proposition \ref{prop3}.
	We first recall the following lemma which will be also used several times later:
	\begin{lem}[\cite{ET3}, Lemma 3.3]\label{lem3}
		If $\beta \geq \gamma \geq 0$ and $\beta +\gamma>1$, then
		\begin{align*}
			\sum_n \frac{1}{\langle n-k_1\rangle^{\beta}\langle n-k_2\rangle^{\gamma}} 
			\approx \int_{\mathbb{R}} \frac{1}{\langle \tau-k_1\rangle^{\beta}\langle \tau-k_2\rangle^{\gamma}} d\tau 
			\lesssim \frac{\phi_{\beta}(k_1-k_2)}{\langle k_1-k_2\rangle^{\gamma} }
		\end{align*}
		where
		\begin{equation*}
			\phi_{\beta}(k):= \sum_{|n|\leq|k|}\frac{1}{\langle n\rangle^{\beta}} \approx
			\begin{cases}
				1,&\beta>1,\\
				\log(1+\langle k\rangle),&\beta=1,\\
				\langle k\rangle^{1-\beta},&0\le\beta<1.
			\end{cases}
		\end{equation*}
	\end{lem}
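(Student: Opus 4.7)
The plan is to prove the two assertions of the lemma separately: first, the equivalence between the discrete sum and the integral over $\mathbb{R}$; second, the pointwise bound on that integral by $\phi_\beta(k_1-k_2)\langle k_1-k_2\rangle^{-\gamma}$.

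For the sum/integral comparison I would argue by an elementary unit-interval estimate. For any $\tau\in[n,n+1]$ and $k_i\in\mathbb{Z}$, we have $\langle\tau-k_i\rangle\leq\langle n-k_i\rangle+1\leq 2\langle n-k_i\rangle$ together with the reverse inequality with the same constant (because $\langle\cdot\rangle\geq1$), so $\langle\tau-k_1\rangle^\beta\langle\tau-k_2\rangle^\gamma\approx\langle n-k_1\rangle^\beta\langle n-k_2\rangle^\gamma$ uniformly on each unit interval. Summing over $n\in\mathbb{Z}$ then identifies the sum with the integral up to universal constants.

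For the integral estimate, after the substitution $\sigma=\tau-k_1$ and setting $K=|k_1-k_2|$ (we may assume $k_2\geq k_1$ by symmetry), the task becomes to bound $I(K)=\int_{\mathbb{R}}\langle\sigma\rangle^{-\beta}\langle\sigma-K\rangle^{-\gamma}\,d\sigma$. When $K\lesssim1$, the target $\phi_\beta(K)\langle K\rangle^{-\gamma}$ is a fixed positive constant, while $I(K)$ is controlled by $\int\langle\sigma\rangle^{-\beta-\gamma}\,d\sigma<\infty$ thanks to the assumption $\beta+\gamma>1$. For $K\gtrsim1$ I would split $\mathbb{R}$ into three regions: a ``near-$0$'' region $|\sigma|\leq K/2$, on which $\langle\sigma-K\rangle\gtrsim\langle K\rangle$ and the contribution is at most $\langle K\rangle^{-\gamma}\int_{|\sigma|\leq K/2}\langle\sigma\rangle^{-\beta}\,d\sigma\lesssim\phi_\beta(K)\langle K\rangle^{-\gamma}$; a symmetric ``near-$K$'' region $|\sigma-K|\leq K/2$, which gives $\phi_\gamma(K)\langle K\rangle^{-\beta}$; and a ``far'' region $|\sigma|,|\sigma-K|\geq K/2$, where the two brackets are comparable to $\langle\sigma\rangle$ and one obtains $\int_{|\sigma|\geq K/2}\langle\sigma\rangle^{-\beta-\gamma}\,d\sigma\lesssim\langle K\rangle^{1-\beta-\gamma}$.

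It remains to check that the last two bounds are both dominated by $\phi_\beta(K)\langle K\rangle^{-\gamma}$. Multiplying through by $\langle K\rangle^\gamma$, these reduce to $\langle K\rangle^{\gamma-\beta}\phi_\gamma(K)\lesssim\phi_\beta(K)$ and $\langle K\rangle^{1-\beta}\lesssim\phi_\beta(K)$, which I would verify by a short case analysis using the explicit asymptotics of $\phi_\beta$ from the statement together with the hypothesis $\beta\geq\gamma$: the three regimes $\beta<1$, $\beta=1$, $\beta>1$ each yield an elementary inequality. The only real subtlety is at the logarithmic threshold $\beta=1$ (or $\gamma=1$), where the logarithm appearing in $\phi_\beta$ must absorb an $O(1)$ factor rather than a power of $\langle K\rangle$. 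This bookkeeping across the three regimes of $\beta$ is the main obstacle; once it is laid out, every individual step is routine.
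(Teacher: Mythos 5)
The paper does not prove this lemma; it is quoted verbatim from \cite{ET3} (Lemma 3.3), so there is no in-paper argument to compare against. Your proof is correct and is essentially the standard one for this estimate: the unit-interval comparison gives the sum/integral equivalence, and the three-region splitting $|\sigma|\le K/2$, $|\sigma-K|\le K/2$, and the complement yields the bounds $\phi_\beta(K)\langle K\rangle^{-\gamma}$, $\phi_\gamma(K)\langle K\rangle^{-\beta}$, and $\langle K\rangle^{1-\beta-\gamma}$, the latter two of which are absorbed into the first precisely because $\beta\ge\gamma$ and $\beta+\gamma>1$ (your case analysis, including the logarithmic endpoints, checks out).
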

	
	Let us now prove \eqref{force} first.
	Let $\eta\in C_0^{\infty}(\mathbb{R})$ be a smooth function supported on $[-2,2]$ with $\eta=1$ on $[-1,1]$.
	By the definition of $X_\delta^{s,b}$-norm, we see
	\begin{align}
		\nonumber
		\|u_0\|_{X_\delta^{s,b'-1}}^2&\le\|\eta(t) u_0\|_{X^{s,b'-1}}^2\\
		\nonumber
		&=\sum_k\int_\tau\langle k\rangle^{2s}\langle \tau+k^3+k^2\rangle^{2b'-2}|\hat{u_0}(k)|^2|\hat{\eta}(\tau)|^2d\tau\\
		\label{a}
		&\lesssim\sum_k\langle k\rangle^{2s}|\hat{u_0}(k)|^2\int_\tau\langle \tau+k^3+k^2\rangle^{2b'-2}\langle\tau\rangle^{-2}d\tau.
	\end{align}
	Here, for the last inequality, we used the fact that if $\eta$ is a compactly supported smooth function then $\hat\eta$ decays faster than $\langle \tau\rangle^{-p}$ for any $p\in\mathbb{N}$. (It is sufficient to choose $p$ as 2.)
	Since $0\leq b'\leq 1$, applying Lemma \ref{lem3} with $\beta=2, \gamma=2-2b'$ to the integration in $\tau$ and using $\langle k^3+k^2\rangle\sim\langle k^3\rangle\sim\langle k\rangle^3$ in turn, the right-hand side of \eqref{a} is bounded by 
	\begin{equation*}
		\sum_k \langle k\rangle^{2s}\langle k^3+ k^2\rangle^{2b'-2}|\hat{u_0}(k)|^2\sim\sum_k\langle k \rangle^{2s+6b'-6}|\hat{u_0}(k)|^2\lesssim\|u_0\|_{H^s}^2.
	\end{equation*}
	
	Next we prove \eqref{trilinear}.
	Since $X^{s,-\frac38} \subset X^{s,b'-1}$ for $b'\leq 5/8$,
	by duality it is sufficient to prove that 
	\begin{equation*}
		\left|\langle uvw, z\rangle_{L^2(\mathbb{T}\times\mathbb{R})}\right| \lesssim\|u\|_{X^{s,\frac38}}\|v\|_{X^{s,\frac38}}\|w\|_{X^{s,\frac38}} \|z\|_{X^{-s,\frac{3}{8}}}.
	\end{equation*}
	Let us set $\mathcal J^s$ to be the Fourier multiplier operators defined by the multiplier $\langle k\rangle^s$.
	Then by H\"older's inequality, 
	\begin{align}
		\nonumber
		\big|\langle uvw, z\rangle_{L^2(\mathbb{T}\times\mathbb{R})}\big|&=\Big|\left\langle \mathcal J^s(uvw), \mathcal J^{-s}z\right\rangle_{L^2(\mathbb{T}\times\mathbb{R})}\Big|\\
		\label{c}
		&\le\left\|\mathcal J^s(uvw)\right\|_{L^{\frac43}(\mathbb{T}\times\mathbb R)}\|\mathcal J^{-s}z\|_{L^4(\mathbb T\times\mathbb R)}.
	\end{align}
	To bound the term $\left\|\mathcal J^s(uvw)\right\|_{L^{\frac43}(\mathbb{T}\times\mathbb R)}$, we apply the following lemma (Lemma \ref{Js}) twice as
	\begin{align}
		\nonumber
		\left\|\mathcal J^s(uvw)\right\|_{L^{\frac43}(\mathbb{T}\times\mathbb R)} &\lesssim \|\mathcal J^s(uv)\|_{L^2}\|w\|_{L^4}+\|uv\|_{L^2}\|\mathcal J^sw\|_{L^4}\\
		\nonumber
		&\lesssim \|\mathcal J^su\|_{L^4}\|v\|_{L^4}\|w\|_{L^4}+\|u\|_{L^4}\|\mathcal J^sv\|_{L^4}\|w\|_{L^4}\\
		\label{d}
		&\qquad \qquad \qquad \qquad \qquad \quad \,\, +\|u\|_{L^4}\|v\|_{L^4}\|\mathcal J^sw\|_{L^4}.
	\end{align}
	
	\begin{lem}[\cite{ET-book} Lemma 1.11] \label{Js}
		Let $s\ge0$ and $1<p_i,q_i,r<\infty$ for $i=1,2$. Then 
		$$\|\mathcal J^s(fg)\|_{L^r}\lesssim\|f\|_{L^{p_1}}\|\mathcal J^sg\|_{L^{q_1}}+\|\mathcal J^sf\|_{L^{p_2}}\|g\|_{L^{q_2}}$$ where  $$\frac1r=\frac1{p_1}+\frac1{q_1}=\frac1{p_2}+\frac1{q_2}.$$
	\end{lem}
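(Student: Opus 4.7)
The plan is to prove the fractional Leibniz inequality through a Littlewood--Paley decomposition combined with Bony's paraproduct formula. Let $\Delta_j$ denote the Littlewood--Paley projection onto frequencies of size $\sim 2^j$ and $S_j=\sum_{k<j}\Delta_k$ the corresponding low-frequency cutoff (on $\mathbb T$ the lowest block is treated separately). Decompose the product
\begin{equation*}
fg = \sum_j S_{j-3}f\cdot \Delta_j g \;+\; \sum_j \Delta_j f\cdot S_{j-3}g \;+\; \sum_{|j-k|\le 2}\Delta_j f\cdot \Delta_k g \;=:\; \Pi_1(f,g)+\Pi_2(f,g)+\Pi_3(f,g),
\end{equation*}
so that the two ``paraproducts'' $\Pi_1,\Pi_2$ are frequency-localized at scale $2^j$, while the diagonal piece $\Pi_3$ has output frequencies only constrained to lie in $\{|\xi|\lesssim 2^j\}$.

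For $\Pi_1$, each summand is supported in a thin annulus of radius $\sim 2^j$, so $\mathcal J^s$ effectively multiplies it by $\langle 2^j\rangle^s$. Using the $L^r$ square-function characterization $\|\mathcal J^s F\|_{L^r}\approx \|(\sum_j\langle 2^j\rangle^{2s}|\Delta_j F|^2)^{1/2}\|_{L^r}$ and the pointwise bound $|S_{j-3}f|\le Mf$ via the Hardy--Littlewood maximal operator, one applies H\"older with $\frac1r=\frac1{p_1}+\frac1{q_1}$ together with Fefferman--Stein / maximal estimates (this is where $1<p_1,q_1,r<\infty$ is used) to obtain $\|\mathcal J^s\Pi_1(f,g)\|_{L^r}\lesssim \|f\|_{L^{p_1}}\|\mathcal J^sg\|_{L^{q_1}}$. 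The paraproduct $\Pi_2$ is handled in the symmetric fashion, producing the second term $\|\mathcal J^sf\|_{L^{p_2}}\|g\|_{L^{q_2}}$.

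The main obstacle is the high-high diagonal term $\Pi_3$, because $\mathcal J^s$ no longer sees a sharp frequency scale and one cannot simply pull $\langle 2^j\rangle^s$ outside. The standard remedy is to pass to duality against $h\in L^{r'}$, insert yet another Littlewood--Paley decomposition on $h$ to recover a useful frequency constraint on the output (only dyadic scales $\ell\lesssim j$ can contribute), shift $\mathcal J^s$ onto one of the high-frequency factors by absorbing $\langle 2^j\rangle^s$ into $\|\mathcal J^s f\|_{L^{p_2}}$ or $\|\mathcal J^sg\|_{L^{q_1}}$, and then apply H\"older and Bernstein to the remaining factors. Cauchy--Schwarz in $j$ reconstitutes the square functions and yields a bound of the same form as for $\Pi_1$ and $\Pi_2$. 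Summing the three contributions gives the claimed inequality; the hypothesis $1<p_i,q_i,r<\infty$ is precisely what is needed so that all the Littlewood--Paley, maximal, and Fefferman--Stein vector-valued inequalities are available.
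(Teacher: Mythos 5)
The paper offers no proof of this lemma at all---it is quoted directly from Erdo\u{g}an--Tzirakis \cite{ET-book} (Lemma 1.11)---so there is no internal argument to compare against; your Littlewood--Paley/paraproduct sketch is the standard proof of this fractional Leibniz rule and is essentially the argument given in that reference. The outline is correct, the only points requiring care in a full write-up being the summability in $j$ of the high-high diagonal term (which uses $s>0$; for $s=0$ the estimate degenerates to H\"older's inequality) and the transference of the square-function and Fefferman--Stein maximal machinery to the periodic setting $\mathbb{T}$.
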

	
	Finally the $L^4$-norms in \eqref{c} and \eqref{d} are easily bounded as desired, using the following lemma with $b=3/8$ and the embedding $X^{s,b}\subset X^{0,b}$ for $s\ge0$:
	
	\begin{lem}[\cite{MT} Proposition 2.4]\label{L4}
		Let $b>1/3$. Then 
		$$\|f\|_{L^4(\mathbb T\times \mathbb R)}\le C\|f\|_{X^{0,b}}$$
		where the constant $C$ depends only on $b$.
	\end{lem}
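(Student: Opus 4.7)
The estimate is of Bourgain's $L^4$-Strichartz type adapted to the dispersion symbol $\tau + k^3 + k^2$, and I would prove it by Plancherel-based reduction to a multiplier/lattice-point bound. By Plancherel, $\|f\|_{L^4}^2 = \|f^2\|_{L^2} = \|\widetilde{f^2}\|_{L^2_\tau \ell^2_k}$, so it suffices to show $\|\widetilde{f^2}\|_{L^2_\tau \ell^2_k} \lesssim \|f\|_{X^{0,b}}^2$. Writing $\widetilde{f^2}$ as a $(\tau,k)$-convolution and normalizing by $F(\tau,k) := \langle \tau + k^3 + k^2\rangle^b \widetilde f(\tau,k)$ (so $\|F\|_{L^2_\tau \ell^2_k} = \|f\|_{X^{0,b}}$), I would obtain
\[\widetilde{f^2}(\tau,k) = \sum_{k_1 \in \mathbb Z} \int_{\mathbb R} \frac{F(\tau_1,k_1)\, F(\tau-\tau_1, k-k_1)}{\langle \tau_1 + k_1^3+k_1^2\rangle^b \langle \tau-\tau_1 + (k-k_1)^3+(k-k_1)^2\rangle^b}\, d\tau_1.\]
A Cauchy-Schwarz in $(\tau_1, k_1)$, followed by Fubini on the $|F|^2|F|^2$ factor, reduces the estimate to the uniform multiplier bound
\[\sup_{\tau,k}\, M(\tau,k) < \infty, \qquad M(\tau,k) := \sum_{k_1} \int \frac{d\tau_1}{\langle \tau_1 + k_1^3+k_1^2\rangle^{2b}\, \langle \tau-\tau_1 + (k-k_1)^3+(k-k_1)^2\rangle^{2b}}.\]

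For $1/3 < b < 1/2$, I would apply Lemma \ref{lem3} to the $\tau_1$ integral with $\beta = \gamma = 2b$ (so that $4b > 1$), bounding it by $\langle \tau + \Phi(k_1)\rangle^{1-4b}$, where $\Phi(k_1) := k_1^3 + k_1^2 + (k-k_1)^3 + (k-k_1)^2$. Using $a^3 + b^3 = (a+b)(a^2 - ab + b^2)$ and completing the square, a direct computation yields
\[\Phi(k_1) = (3k+2)\,(k_1 - k/2)^2 + \tfrac{1}{4} k^2(k+2),\]
which is a nondegenerate quadratic polynomial in $k_1$, since $3k+2 \in \mathbb Z \setminus \{0\}$ for every $k \in \mathbb Z$. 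The problem therefore reduces to establishing the single lattice-point sum bound
\[\sum_{k_1 \in \mathbb Z} \langle \tau + \Phi(k_1)\rangle^{1-4b} \lesssim 1\]
uniformly in $(\tau, k)$.

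\textbf{Main obstacle.} The sharp threshold $b > 1/3$ is exactly where the cubic nature of the symbol enters nontrivially. A naive dyadic lattice count---using only that at most $\lesssim 2^{j/2}/\sqrt{|3k+2|}$ integers $k_1$ satisfy $\langle \tau + \Phi(k_1)\rangle \sim 2^j$, since $\Phi$ is quadratic with leading coefficient $3k+2$---closes the sum only for $b > 3/8$, which is too weak. To recover the sharp $b > 1/3$ threshold, one must decompose $f$ further into modulation-dyadic pieces and sum carefully across scales, exploiting the cubic dispersion relation in the spirit of Bourgain's original $L^4$ Strichartz argument for the Airy equation on $\mathbb T$. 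I would follow this route, which is precisely the content of Proposition 2.4 of \cite{MT} that this lemma cites.
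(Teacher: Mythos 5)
First, a point of comparison: the paper gives no proof of Lemma \ref{L4} at all --- it is imported verbatim from \cite{MT}, Proposition 2.4 --- so your attempt is being measured against a citation rather than an argument. Your reduction is the standard and correct one: Plancherel, normalization by $F=\langle\tau+k^3+k^2\rangle^b\widetilde f$, Cauchy--Schwarz in $(\tau_1,k_1)$, and Lemma \ref{lem3} with $\beta=\gamma=2b$ to dispose of the $\tau_1$-integral; your identity $\Phi(k_1)=(3k+2)(k_1-k/2)^2+\tfrac14k^2(k+2)$ is also correct, and is equivalent to the factorization of the resonance function,
\[
(k^3+k^2)-(k_1^3+k_1^2)-\bigl((k-k_1)^3+(k-k_1)^2\bigr)=(3k+2)\,k_1(k-k_1),
\]
with $3k+2\neq0$ for all $k\in\mathbb Z$ (this is where the second-order term earns its keep: for the pure Airy symbol the $k=0$ fiber would be degenerate). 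The gap is that you stop at exactly the step that constitutes the proof. The counting bound $\#\{k_1:\langle\tau+\Phi(k_1)\rangle\sim2^j\}\lesssim1+2^{j/2}|3k+2|^{-1/2}$ closes the dyadic sum $\sum_j2^{j(1-4b)}N_j$ only for $b>3/8$, as you yourself note, and deferring the passage from $3/8$ down to the stated threshold $1/3$ to ``the content of Proposition 2.4 of \cite{MT}'' is circular: that proposition is the very statement being proved. To finish, one must exploit the cubic size of the leading coefficient --- e.g.\ split according to whether $|3k+2|\gtrsim2^{j}$, where $N_j=O(1)$, and in the complementary range replace the crude square-root-of-interval-length count by one that uses the factored form of the resonance --- and you have not supplied that argument.

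Two mitigating remarks. (1) Everything you did write down is correct, and the obstacle is honestly and accurately located rather than papered over. (2) The weaker statement your naive count does establish, namely $\|f\|_{L^4}\le C_b\|f\|_{X^{0,b}}$ for $b>3/8$, would in fact suffice for this paper: Lemma \ref{L4} is invoked only with $b=3/8$ inside the proof of Proposition \ref{prop3}, and that proof runs equally well with $3/8+\epsilon$ provided one takes $b'\le 5/8-\epsilon$ in the duality step $X^{s,-b}\subset X^{s,b'-1}$ and correspondingly restricts $b<5/8-\epsilon$ in Theorem \ref{wellp}, which costs nothing since the subsequent sections use $b$ near $1/2$. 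So your argument, completed at the $3/8$ threshold, yields a self-contained substitute for the citation as far as this paper's needs go --- but it does not prove the lemma as stated.
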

	
	\section{Proofs of Theorems \ref{Tal} and \ref{t:dim}}\label{sec3-0}
	In this section we prove Theorems \ref{Tal} and \ref{t:dim} in the time interval $[-\delta,\delta]$ in which solutions are guaranteed to exist by Theorem \ref{wellp}.
	
	Before we prove the theorems in detail, we shall present some preliminaries.
	We first apply the Fourier transform to \eqref{TLL} to write 
	\begin{equation}\label{fourier}
		\begin{cases}	
			\partial_t \widehat u(k)=-ik^3\widehat u(k)-ik^2\widehat u(k)-\widehat u(k)+i\widehat{|u|^2u}(k)+\widehat {u_0}(k),\\
			\widehat u(0,k)=\widehat {u_0}(k),
		\end{cases}	
	\end{equation}
	and decompose the cubic nonlinear term $\widehat{|u|^2u}(k)$  as 
	\begin{align}\label{res}
		\widehat{|u|^2u}(k)&=\sum_{k_1, k_2}\hat{u}(k_1)\bar{\hat{u}}(k_2)\hat{u}(k-k_1+k_2)\nonumber\\
		&=2\|\hat{u}\|_{l_k^2}^2\hat{u}(k)-|\hat{u}(k)|^2\hat{u}(k)+\sum_{\substack{k_1\neq k\\k_2\neq k_1}}\hat{u}(k_1)\bar{\hat{u}}(k_2)\hat{u}(k-k_1+k_2)
	\end{align}
	by which the equation \eqref{fourier} is rearranged as
	\begin{equation*}
		\begin{cases}
			\partial_t \widehat u+(ik^3+ik^2+1-2i\|\widehat u\|_{l_k^2}^2)\widehat u=i\widehat{\rho(u)}+i\widehat{R(u)}+\widehat u_0,\\
			\widehat u(0,k)=\widehat {u_0}(k),
		\end{cases}	
	\end{equation*}
	where
	\begin{equation}\label{rho}
		\widehat{\rho(u)}(k)=-|\hat{u}(k)|^2\hat{u}(k),
	\end{equation}
	\begin{equation}
		\label{Rr}
		\widehat{R(u)}(k)=\sum_{\substack{k_1\neq k\\k_2\neq k_1}}\hat{u}(k_1)\bar{\hat{u}}(k_2)\hat{u}(k-k_1+k_2).
	\end{equation}
	Regarding the term $i{\rho(u)}+i{R(u)}+u_0$ as a source term,
	the solution can be then written as
	\begin{equation}\label{TLL-D}
		u(t,x)=e^{\partial_x^3t+i\partial_x^2t-t+4\pi i\int_0^t\|u\|_{L_x^2}^2ds}u_0+\mathcal{N}(t,x),
	\end{equation}
	where $$\mathcal N(t,x)=i\int_0^t e^{(\partial_x^3+i\partial_x^2-1)(t-t')} e^{4\pi i\int_{t'}^t\|u\|_{L_x^2}^2ds}(i\rho(u)+iR(u)+u_0)dt'$$
	satisfies the following smoothing property that is the key ingredient in the proofs and will be proved in Section \ref{sec3}.
	\begin{prop}\label{smoothing}
		Let $s>0$ and $0<a<\min\left\{2s,1\right\}$.	
		If $u_0\in H^s (\mathbb{T})$,
		then we have
		\begin{equation}\label{nonsmo}
		\mathcal{N}(t,x)\in C([-\delta,\delta]; H_x^{s+a}(\mathbb T))
  \end{equation}
		with $\delta>0$ obtained in Theorem \ref{wellp}. 
	\end{prop}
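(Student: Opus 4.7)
The plan is to establish the claimed smoothing by a normal-form reduction (integration by parts in time, carried out on the Fourier side), applied separately to the three source contributions in the definition of $\mathcal N$. Writing $\mathcal N=\mathcal N_0+\mathcal N_\rho+\mathcal N_R$ for the pieces arising from $u_0$, $\rho(u)$ and $R(u)$, it suffices to bound each piece uniformly in $t\in[-\delta,\delta]$ in $H_x^{s+a}$; continuity in $t$ then follows by dominated convergence once the uniform bounds are in hand.

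For the forcing piece $\mathcal N_0$, the Fourier coefficient $\widehat{\mathcal N_0}(t,k)$ has the form
\begin{equation*}
i\int_0^t e^{-(i(k^3+k^2)+1)(t-t')}\lambda(t,t')\widehat{u_0}(k)\,dt',\qquad \lambda(t,t'):=e^{4\pi i\int_{t'}^t\|u\|_{L_x^2}^2\,ds},
\end{equation*}
and a single integration by parts against the oscillatory factor trades the multiplier $(i(k^3+k^2)+1)^{-1}\sim\langle k\rangle^{-3}$, so that the resulting boundary and remainder terms are both controlled by $\|u_0\|_{H^s}$ in any $H^{s+a}$ with $a<3$; here $|\lambda|\equiv 1$ and $|\partial_{t'}\lambda|\lesssim\|u(t')\|_{L^2}^2\lesssim\|u_0\|_{H^s}^2$ by Theorem \ref{wellp}. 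The same mechanism handles $\mathcal N_\rho$: when $\partial_{t'}$ falls on a factor of $\hat{u}(k,t')$ one substitutes the evolution equation \eqref{fourier} and absorbs the resulting quintic expression using the pointwise estimate $|\hat{u}(k)|\le\|u\|_{L^2}$ together with the $X_\delta^{s,b}$-bound from Theorem \ref{wellp}.

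The substantive piece is $\mathcal N_R$. A direct algebraic computation using $k-k_1+k_2-k_3=0$ yields the factorisation of the cubic resonance function
\begin{equation*}
\Omega:=(k^3+k^2)-(k_1^3+k_1^2)+(k_2^3+k_2^2)-(k_3^3+k_3^2)=(k-k_1)(k_1-k_2)\bigl[3(k+k_2)+2\bigr],
\end{equation*}
and on the non-resonant domain $k_1\neq k$, $k_2\neq k_1$ the first two factors are nonzero integers while the third is never zero (it is $\equiv 2\pmod{3}$), hence $|\Omega|\gtrsim|k-k_1||k_1-k_2|$. Integrating by parts in $t'$ against the phase $e^{-i\Omega t'}$ trades a factor of $\Omega^{-1}$: the boundary contribution is a trilinear expression $\Omega^{-1}\hat{u}(k_1)\bar{\hat{u}}(k_2)\hat{u}(k_3)$, and the interior contribution, after using \eqref{fourier} to eliminate $\partial_{t'}\hat u$, is a quintic expression weighted by the same $\Omega^{-1}$.

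The main obstacle is to close the resulting trilinear bound
\begin{equation*}
\Big\|\langle k\rangle^{s+a}\sum_{\substack{k_1\neq k\\k_2\neq k_1}}\frac{\hat{u}(k_1)\bar{\hat{u}}(k_2)\hat{u}(k-k_1+k_2)}{(k-k_1)(k_1-k_2)[3(k+k_2)+2]}\Big\|_{\ell^2_k}\lesssim\|u\|_{H^s}^3
\end{equation*}
and its quintic analogue. My approach is to split the sum according to which of $\langle k\rangle,\langle k_1\rangle,\langle k_2\rangle,\langle k_3\rangle$ is largest, apply Cauchy--Schwarz in two of the indices to extract two factors of $\|u\|_{H^s}$, and evaluate the remaining one-dimensional sum via Lemma \ref{lem3}. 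The restriction $a<\min\{2s,1\}$ is precisely the threshold at which this case analysis closes: of the roughly $\langle k\rangle^{-2}$ of clean smoothing supplied by $\Omega^{-1}$, up to $2s$ is spent rebalancing the $\langle k_j\rangle^{-s}$-weights between the three inputs, leaving at most $\min\{2s,1\}$ derivatives available for the target $H^{s+a}$.
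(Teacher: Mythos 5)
Your overall strategy --- differentiation by parts in time against the resonance phase, rather than the paper's route through the gauge transformation \eqref{trans} and the $X^{s,b}$ trilinear estimate \eqref{ff3} --- is a legitimate alternative, and your factorisation $\Omega=(k-k_1)(k_1-k_2)[3(k+k_2)+2]$ together with the trilinear boundary bound is essentially the multiplier computation the paper performs (its \eqref{sum} at the endpoint $b=\tfrac12$). But there are two genuine gaps. First, integration by parts does nothing for $\mathcal N_\rho$: the resonance function vanishes on the diagonal $k_1=k_2=k$, so the only available oscillation is $e^{(-i(k^3+k^2)-1)(t-t')}$, and when you ``substitute the evolution equation \eqref{fourier}'' the term $-i(k^3+k^2)\widehat u(k)$ in $\partial_{t'}\widehat u(k)$ reproduces a factor of size $|k^3+k^2|$ that exactly cancels the $(i(k^3+k^2)+1)^{-1}$ you just gained; your accounting keeps only the quintic contribution and misses this linear one. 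The smoothing for $\rho$ has to come from somewhere else entirely, namely the elementary embedding $\|\langle k\rangle^{s+a}|\widehat u(k)|^3\|_{\ell^2}=\|\langle k\rangle^{(s+a)/3}\widehat u\|_{\ell^6}^3\lesssim\|u\|_{H^s}^3$ valid for $a\le 2s$ (this is the paper's \eqref{ff2}, and it is a source of the restriction $a<2s$ that you never invoke).

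Second, the ``quintic analogue'' you defer is the real technical content of your approach and does not follow from the same Cauchy--Schwarz scheme. After substituting \eqref{fourier}, the interior term contains $\sum\Omega^{-1}\widehat{|u|^2u}(k_1)\bar{\widehat u}(k_2)\widehat u(k_3)$ and permutations; to feed $\widehat{|u|^2u}(k_1)$ into your Cauchy--Schwarz argument in place of $\widehat u(k_1)$ you need $\||u|^2u\|_{H^s}\lesssim\|u\|_{H^s}^3$, i.e.\ the algebra property of $H^s$, which fails for the range $0<s\le\tfrac12$ allowed by the proposition (and needed in its applications, where $u_0\in H^{\frac12-}$). Recovering such a bound forces you into space-time norms --- the $L^4_{t,x}$ estimate of Lemma \ref{L4} and the $X^{s,b}$ bound \eqref{prop2} --- at which point you have essentially rebuilt the paper's proof, which avoids the quintic term altogether by estimating $R(v)$ directly in $X^{s+a,b-1}$. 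The pieces you do carry out ($\mathcal N_0$, and the trilinear boundary term for $\mathcal N_R$) are sound, but as written the argument does not close for $\rho$ or for the quintic remainder.
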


	\subsection{Proof of Theorem \ref{Tal}}
	We first show that $\mathcal N(t,\cdot)$ is a continuous function.
	Since $u_0\in BV$, we have $u_0\in H^{\frac12-}$.
	Then, by Proposition \ref{smoothing} with $s=\frac12-$, we conclude that 
	\begin{equation*}
		\mathcal N(t,x) \in C([-\delta,\delta];H_x^{\frac32-}(\mathbb T)).
	\end{equation*}
	From the Sobolev embedding
	\begin{equation}\label{e:sob-emb}
		H^s \hookrightarrow C^{s-\frac {1}{2}} \quad \text{for} \quad s>1/2,
	\end{equation} 
	it follows that 
	\begin{equation}\label{e:joint-conti}
		\mathcal N(t,x) \in C([-\delta,\delta];C_x^{1-}(\mathbb T)),
	\end{equation}
	and hence $\mathcal N(t,\cdot)$ is continuous on $[-\delta,\delta]\times \mathbb{T}$.
	
	For the homogeneous part $e^{\partial_x^3t+i\partial_x^2t-t+4\pi i\int_0^t\|u\|_{L^2}^2dt'}u_0$, we shall make use of the following known result due to Oskolkov \cite[Proposition 14 and p. 390]{O}:
	\begin{prop}\label{Tal1}
		Suppose that  $u_0\in BV$. 
		\begin{enumerate}
			[leftmargin=0.7cm]
			\item[$(\romannum{1})$] If $t\notin\pi\mathbb{Q}$, then $e^{\partial_x^3t+i\partial_x^2t}u_0$ is a continuous function of $x$. If $t\in\pi\mathbb{Q}$ and $u_0$ has at least one discontinuity on $\mathbb T$, then $e^{\partial_x^3t+i\partial_x^2t}u_0$ necessarily contains discontinuities.
			\item[$(\romannum{2})$] If $u_0$  is continuous, then $e^{\partial_x^3t+i\partial_x^2t}u_0$ is jointly continuous in temporal and spatial variables. 
		\end{enumerate}
	\end{prop}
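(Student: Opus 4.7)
The plan is to diagonalize in Fourier and reduce everything to exponential-sum estimates. Writing
\[
e^{\partial_x^3 t+i\partial_x^2 t}u_0(x)=\sum_{k\in\mathbb Z}\hat u_0(k)\,e^{-it(k^3+k^2)}\,e^{ikx},
\]
and using the decay $|\hat u_0(k)|\lesssim 1/|k|$ (obtained by writing $\hat u_0(k)=(ik)^{-1}\widehat{du_0}(k)$, where $du_0$ is the finite $BV$-measure), the whole analysis is governed by the behaviour of the phases $e^{-it(k^3+k^2)}$ as $k$ varies.

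\emph{Rational times in (i).} Take $t=\pi p/q$ in lowest terms. A direct expansion yields
\[
-t\bigl((k+2q)^3+(k+2q)^2-k^3-k^2\bigr)=-2\pi p\,(3k^2+6qk+4q^2+2k+2q)\in 2\pi\mathbb Z,
\]
so $e^{-it(k^3+k^2)}$ is $2q$-periodic in $k$. Discrete Fourier analysis on $\mathbb Z/2q\mathbb Z$ then provides coefficients $c_0,\ldots,c_{2q-1}$ such that
\[
e^{\partial_x^3 t+i\partial_x^2 t}u_0(x)=\sum_{j=0}^{2q-1}c_j\,u_0\!\left(x+\tfrac{\pi j}{q}\right),
\]
a bounded finite linear combination of translates of $u_0$, whose discontinuity set is contained in a countable union of translates of the jump set of $u_0\in BV$. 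Parseval on $\mathbb Z/2q\mathbb Z$ forces $\sum_j|c_j|^2=1$, hence some $c_{j_0}\ne 0$; a jump of $u_0$ at $x_0$ then propagates to a jump of size $c_{j_0}\bigl(u_0(x_0^+)-u_0(x_0^-)\bigr)$ in the $j_0$-th translate at the point $x_0-\pi j_0/q$, at which the other translates are continuous, so this jump cannot be cancelled.

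\emph{Irrational times in (i).} For $t\notin\pi\mathbb Q$ I plan to prove continuity in $x$ by establishing uniform convergence of the partial sums. Summation by parts reduces the problem to bounds on the Weyl sums
\[
S_{M,N}(x)=\sum_{M<k\le N}e^{-it(k^3+k^2)+ikx}.
\]
Since $t/\pi$ is irrational, the cubic leading term prevents resonance: Weyl's polynomial equidistribution theorem yields $\sup_x|S_{M,N}(x)|=o(N-M)$, and Vinogradov's method upgrades this to a power saving $(N-M)^{1-\eta}$ on a full-measure set of $t$. Combining such bounds on dyadic blocks with the $O(1/|k|)$ decay of $\hat u_0$ yields $x$-uniform convergence of the full Fourier series, so its limit is continuous. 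Securing this Weyl/Vinogradov bound uniformly in $x$ for \emph{every} irrational $t$ is the chief technical obstacle of the proof and is the only genuinely number-theoretic input.

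\emph{Part (ii).} For continuous $u_0$ the plan is density combined with case analysis. Uniformly approximate $u_0$ by trigonometric polynomials $P_n$; each $e^{\partial_x^3 t+i\partial_x^2 t}P_n$ is itself a trigonometric polynomial and hence jointly continuous in $(t,x)$. Sharpening the Weyl-sum analysis of part (i) to bounds locally uniform in $t$ on compact subsets of $\mathbb R\setminus\pi\mathbb Q$ transfers joint continuity through the limit at irrational times. At each rational $t_0=\pi p/q$ the finite-translate representation, applied to a continuous $u_0$, is automatically continuous in $x$; continuous dependence of the coefficients on $t$ in a neighbourhood of $t_0$, reduced once more to a Weyl-sum statement in the complementary phase regime, patches the two regimes into joint continuity of the full evolution.
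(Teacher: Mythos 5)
The paper does not prove this proposition at all: it is quoted verbatim from Oskolkov \cite[Proposition 14 and p.~390]{O} (with the rational-time refinement recorded in Remark \ref{rem1} via \cite[Theorem 2.14]{ET-book}), so you are attempting from scratch a result the authors deliberately outsource. Your rational-time computation is the standard one and is essentially correct: the phase $e^{-it(k^3+k^2)}$ is indeed $2q$-periodic at $t=\pi p/q$, giving the finite sum of translates. But your non-cancellation step is wrongly justified. It is not true that "the other translates are continuous" at $x_0-\pi j_0/q$: the translate $u_0(\cdot+\pi j/q)$ jumps there whenever $u_0$ jumps at $x_0+\pi(j-j_0)/q$, so if $u_0$ has several discontinuities on the lattice $x_0+\frac{\pi}{q}\mathbb Z$, many translates jump at the same point and cancellation is a priori possible. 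The correct repair is to observe that the discrete Fourier transform of $(c_j)$ is the unimodular sequence $e^{-it(m^3+m^2)}$, so the convolution operator acting on the vector of jump sizes is invertible; equivalently, $e^{-(\partial_x^3+i\partial_x^2)t}$ is itself a finite combination of translates, so a continuous image would force $u_0$ to be continuous.

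The irrational-time half of (i) and all of (ii) contain genuine gaps. The proposition asserts continuity for \emph{every} $t\notin\pi\mathbb Q$, whereas the Vinogradov-type power saving you invoke holds only for almost every $t$; Weyl's qualitative bound $o(N-M)$ combined with $|\hat u_0(k)|\lesssim 1/|k|$ does not by itself give uniform convergence of $\sum_k \hat u_0(k)e^{-it(k^3+k^2)+ikx}$ (note also that $\hat u_0(k)=(ik)^{-1}\widehat{du_0}(k)$ has a bounded but oscillating numerator, so summation by parts is not clean). The all-$t$ statement is exactly the content of Oskolkov's theorem on Vinogradov series and requires a major/minor-arc analysis in the rational approximations of $t/\pi$; you have correctly identified this as "the chief technical obstacle" but not supplied it, which means the central claim is not proved. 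For (ii), the density argument fails at the first step: $e^{(\partial_x^3+i\partial_x^2)t}$ is \emph{not} uniformly bounded on $L^\infty$ (its norm at $t=\pi p/q$ grows like $\sqrt q$, which is the Talbot effect itself), so uniform approximation of $u_0$ by trigonometric polynomials cannot be pushed through the evolution to yield joint continuity. Given these obstacles, the sensible course in this paper is the one the authors take: cite \cite{O} rather than reprove it.
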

	\begin{rem}\label{rem1}
		It is known in \cite[Theorem 2.14]{ET-book} that
		$e^{\partial_x^3t+i\partial_x^2t}u_0$ is a linear sum of finitely many translates of $u_0$ if $t\in\pi\mathbb Q$. 
		Hence, in Proposition \ref{Tal1} $(\romannum{1})$, $e^{\partial_x^3t+i\partial_x^2t}u_0\in BV$ if $t\in \pi \mathbb {Q}$, so it contains at most countable discontinuities.
	\end{rem}
	Now for $t\notin \pi \mathbb Q$, it follows by Proposition \ref{Tal1} $(\romannum{1})$ that 
	\begin{equation}\label{e:evolution}
		e^{\partial_x^3t+i\partial_x^2t-t+4\pi i\int_0^t\|u\|_{L^2}^2ds}u_0 = e^{-t+4\pi i\int_0^t\|u\|_{L^2}^2ds}e^{\partial_x^3t+i\partial_x^2t}u_0 
	\end{equation}
	is continuous. 
	Combined with \eqref{e:joint-conti}, the solution \eqref{TLL-D} becomes continuous.
	
	If $t\in \pi \mathbb Q$ and $u_0$ is discontinuous, then it follows from Proposition \ref{Tal1} $(\romannum{1})$ and Remark \ref{rem1} that the evolution \eqref{e:evolution} is of bounded variation and contains at most countable discontinuities. Combining this with \eqref{e:joint-conti}, the solution \eqref{TLL-D} is a discontinuous bounded function with at most countable discontinuities. 
	
	Finally, if $u_0$ is continuous, so is the evolution \eqref{e:evolution} by Proposition \ref{Tal1} $(\romannum{2})$. Combining this with \eqref{e:joint-conti}, we therefore see that the solution \eqref{TLL-D} is jointly continuous on $[-\delta,\delta]\times \mathbb T$.
	
	\subsection{Proof of Theorem \ref{t:dim}}	
	We begin by introducing the Besov space and its properties that we need.
	Let $\phi \in C^\infty_0([-2,-\frac12] \cup [\frac12,2])$ be such that $\sum_{j\in\mathbb Z}\phi(2^{-j}t)=1$ for $t\in\mathbb R\setminus\{0\}$, and let $\phi_0(t):=1-\sum_{j\ge1}\phi(2^{-j}t)$. We denote by $P_j$ the projections defined by  
	\[	P_0f(x):=\sum_{k\in \mathbb Z} \phi_0(k)\widehat f(k)e^{ikx}, \quad P_j f(x):= \sum_{k\in \mathbb Z} \phi(2^{-j}k)\widehat f(k)e^{ikx}, \ \ j\ge 1.	\] 
	For $1\le p\le\infty$ and $s\ge 0$, the inhomogeneous Besov space $B^s_{p,\infty}$ on $\mathbb T$ is a Banach space of functions equipped with the norm 
	\begin{equation*}
		\|f\|_{B_{p,\infty}^s} := 
		\sup_{j\ge 0} 2^{sj}\|P_jf\|_{L^p(\mathbb{T})}.
	\end{equation*}
	
	In order to calculate the upper Minkowski dimension $D$ of the solution graph, we exploit the following basic results in geometric measure theory:
	\begin{lem}[\cite{F}]\label{l:upper}
		Let $0\le \alpha\le 1$. If $f\colon \mathbb T\to \mathbb R$ is in $C^\alpha$, then the upper Minkowski dimension of the graph of $f$ is at most $2-\alpha$.  
	\end{lem}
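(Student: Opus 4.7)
The plan is to bound the box-counting number $N_\delta$ of the graph directly from the H\"older estimate. Recall that the upper Minkowski dimension of a bounded set $E\subset\mathbb R^2$ is
\[
\overline{\dim}_M E = \limsup_{\delta\to 0^+}\frac{\log N_\delta(E)}{-\log \delta},
\]
where $N_\delta(E)$ denotes the minimum number of closed squares of side $\delta$ needed to cover $E$. So it suffices to show that $N_\delta(\mathrm{graph}(f))\lesssim \delta^{\alpha-2}$ as $\delta\to 0^+$.

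First I would partition $\mathbb T$ into consecutive arcs $I_1,\dots,I_M$ of length $\delta$, so that $M\lesssim \delta^{-1}$. By the assumption $f\in C^\alpha$, there is a constant $C>0$ with $|f(x)-f(y)|\le C|x-y|^\alpha$, hence on each $I_j$ the oscillation of $f$ satisfies
\[
\sup_{x,y\in I_j}|f(x)-f(y)|\le C\delta^\alpha.
\]
Consequently the portion of $\mathrm{graph}(f)$ lying over $I_j$ is contained in a rectangle of width $\delta$ and height $C\delta^\alpha$, which can be covered by at most $\lceil C\delta^{\alpha-1}\rceil+1$ axis-aligned squares of side $\delta$. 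Summing over $j$ gives
\[
N_\delta(\mathrm{graph}(f))\;\lesssim\; M\cdot\bigl(\delta^{\alpha-1}+1\bigr)\;\lesssim\;\delta^{\alpha-2},
\]
using $0\le\alpha\le 1$ so that the $\delta^{\alpha-1}$ term dominates as $\delta\to 0^+$.

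Taking logarithms and passing to the limsup yields $\overline{\dim}_M\mathrm{graph}(f)\le 2-\alpha$, as required. There is no real obstacle; the only mild technical point is making sure the covering argument is carried out with squares of a single scale $\delta$ (rather than balls or rectangles of different aspect ratios), so that the count feeds directly into the definition of $N_\delta$. The estimate is sharp in the sense that one cannot use any integrability of $f$ beyond pointwise H\"older control, which is exactly the hypothesis given.
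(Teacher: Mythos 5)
Your argument is correct and is exactly the standard box-counting proof of this fact; the paper itself gives no proof but cites Falconer's book, where the result is established by precisely this oscillation-over-$\delta$-intervals covering argument. Nothing to add.
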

	\begin{lem}[\cite{DJ}]\label{FDL}
		Let $0<s<1$. If $f\colon\mathbb{T}\to \mathbb{R}$ is continuous and $f\notin B_{1,\infty}^{s+}$, then the upper Minkowski dimension of the graph of $f$ is at least $2-s$.
	\end{lem}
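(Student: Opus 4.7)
The plan is to exploit the standard first-difference characterization of the Besov space $B^{\sigma}_{1,\infty}$ on the torus, valid for $0<\sigma<1$ and equivalent to the Littlewood--Paley definition used earlier in this section:
$$\|f\|_{B^\sigma_{1,\infty}} \approx \|f\|_{L^1(\mathbb T)} + \sup_{0<h\le 1} h^{-\sigma}\|f(\cdot+h)-f(\cdot)\|_{L^1(\mathbb T)}.$$
The hypothesis $f\notin B^{s+}_{1,\infty}$ then says that for every $\sigma>s$ this supremum is infinite, so one can extract a sequence $h_n=h_n(\sigma)\to 0^+$ along which
$$\|f(\cdot+h_n)-f(\cdot)\|_{L^1(\mathbb T)}\ge \omega_n\, h_n^{\sigma},\qquad \omega_n\to\infty.$$

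Next I would translate this $L^1$-modulus information into a lower bound on the box-counting function $N(h)$ of the graph $\Gamma(f)=\{(x,f(x)):x\in \mathbb T\}$. Partition $\mathbb T$ into $M\sim h^{-1}$ consecutive intervals $I_1,\dots,I_M$ of length $h$. Since $f$ is continuous, over each $I_i$ the graph sweeps out a vertical extent $\mathrm{osc}_{I_i}(f):=\sup_{I_i}f-\inf_{I_i}f$, so covering that column requires at least $\mathrm{osc}_{I_i}(f)/h$ squares of side $h$, giving the lower bound $N(h)\ge h^{-1}\sum_i \mathrm{osc}_{I_i}(f)$. For the reverse comparison, $x\in I_i$ forces $x+h\in I_i\cup I_{i+1}$, so $|f(x+h)-f(x)|\le \mathrm{osc}_{I_i\cup I_{i+1}}(f)$; integrating over $\mathbb T$ yields $\|f(\cdot+h)-f(\cdot)\|_{L^1(\mathbb T)}\le 2h\sum_i \mathrm{osc}_{I_i}(f)$. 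Chaining the two inequalities produces the key bound
$$N(h)\gtrsim h^{-2}\,\|f(\cdot+h)-f(\cdot)\|_{L^1(\mathbb T)}.$$

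Evaluating this estimate along the subsequence $h_n$ from the first step then gives $N(h_n)\gtrsim \omega_n h_n^{\sigma-2}$, whence
$$\overline{\dim}_M\Gamma(f)=\limsup_{h\to 0^+}\frac{\log N(h)}{\log(1/h)}\ge 2-\sigma.$$
Since $\sigma>s$ is arbitrary, letting $\sigma\searrow s$ delivers the asserted lower bound $2-s$. I do not foresee a serious obstacle: the argument essentially wires the standard first-difference description of $B^\sigma_{1,\infty}$ to the standard column-by-column box-counting estimate, and continuity of $f$ enters only to ensure that $\Gamma(f)$ is a compact subset of $\mathbb T\times\mathbb R$ so that its Minkowski dimension is well defined. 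The mildly delicate steps are (i) citing the equivalence between the Littlewood--Paley and modulus-of-continuity characterizations of $B^\sigma_{1,\infty}$ on $\mathbb T$ uniformly in $\sigma$ close to $s$, and (ii) handling the $\sup$/$\limsup$ passage cleanly when taking $\sigma\searrow s$, neither of which should present difficulty.
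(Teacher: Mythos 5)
The paper does not prove this lemma at all: it is imported verbatim from Deliu--Jawerth \cite{DJ}, so there is no internal argument to compare against. Your proof is correct and is essentially the standard (indeed the Deliu--Jawerth) route: the first-difference characterization of $B^{\sigma}_{1,\infty}$ for $0<\sigma<1$, the column-by-column oscillation lower bound $N(h)\ge h^{-1}\sum_i \operatorname{osc}_{I_i}(f)$, and the comparison $\|f(\cdot+h)-f\|_{L^1}\le 2h\sum_i\operatorname{osc}_{I_i}(f)$, chained to give $N(h)\gtrsim h^{-2}\|f(\cdot+h)-f\|_{L^1}$ and then evaluated along a sequence $h_n\to 0^+$ witnessing $f\notin B^{\sigma}_{1,\infty}$ for each fixed $\sigma>s$, followed by $\sigma\searrow s$. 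Two small remarks. First, your closing claim that continuity enters ``only'' to make the graph compact understates its role: continuity is what makes the oscillation lower bound valid, via the intermediate value theorem (the $y$-values over $I_i$ fill the whole interval of length $\operatorname{osc}_{I_i}(f)$, so at least $\operatorname{osc}_{I_i}(f)/h$ grid squares in that column are forced); you do in fact invoke continuity at that step, so this is a mischaracterization rather than a gap. Second, no uniformity in $\sigma$ of the norm equivalence is actually needed, since the argument is run for each fixed $\sigma>s$ separately and only the resulting dimension bounds $2-\sigma$ are passed to the limit; also note that since $f$ is bounded, the sequence $h_n$ automatically tends to $0$, which you should say explicitly when extracting it.
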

	
	By Lemma \ref{l:upper}, we first conclude that the upper bound of the upper Minkowski dimension $D$ of both  $\mathrm{Re}\,u(t,\cdot)$ and $\mathrm{Im}\,u(t,\cdot)$ is $7/4$,
	if we show 
	\begin{equation}\label{e}
		u(t,\cdot)\in C^{\frac14-} \quad\ \text{for almost all}\,\,\, t\in[-\delta,\delta]\setminus\pi\mathbb{Q}.
	\end{equation}
	To show \eqref{e} for the Duhamel term 
	$\mathcal N(t,x)$ in \eqref{TLL-D}, we use Proposition \ref{smoothing};
	since $u_0\in H^{\sigma_0-}$ and $1/2\leq \sigma_0<5/8$ (from the definition of $\sigma_0$), we have
	\begin{equation}\label{g}
		\mathcal N(t,\cdot)\in C([-\delta,\delta];H_x^{\frac32-}(\mathbb T))
	\end{equation} 
	by Proposition \ref{smoothing}, and moreover
	\begin{equation}\label{Nsmoot}
		\mathcal N(t,\cdot)\in C([-\delta,\delta];C_x^{1-}(\mathbb T))
	\end{equation}
	by the Sobolev embedding \eqref{e:sob-emb}.
	For the homogeneous term in \eqref{TLL-D}, we apply the following proposition (see \cite[Theorem 2.16]{ET-book}) with degree $d=3$ to get
	\begin{equation}\label{lin:reg}
		e^{\partial_x^3t+i\partial_x^2t-t+4\pi i\int_0^t\|u\|_{L^2}^2ds}u_0\in C^{\frac14-}\setminus B_{1,\infty}^{(2s_0-\frac14)+},
	\end{equation}
	and thus \eqref{e} follows by combining \eqref{Nsmoot} and \eqref{lin:reg}.
	
	\begin{prop}\label{linear}
		Let $P$ be a polynomial of degree $d$ with integer coefficients, and $P(0)=0$. Suppose that $g:\mathbb{T}\to\mathbb{C}$ is of bounded variation. Then for almost every $t\notin\pi\mathbb Q$, $e^{itP(-i\partial_x)}g\in C^\alpha(\mathbb{T})$ for any $0\le\alpha<2^{1-d}$. Moreover, when $P$ is not an odd polynomial, if in addition $g\notin H^{s_0+}$ for some $s_0\in[\frac12,\frac12+2^{-d})$, then for almost all $t\notin\pi\mathbb Q$ both $\mathrm{Re}( e^{itP(-i\partial_x)}g)$ and $\mathrm{Im}( e^{itP(-i\partial_x)}g)$ do not belong to $B_{1,\infty}^{(2s_0-2^{1-d})+}$.
	\end{prop}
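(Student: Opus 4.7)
The plan is to expand the linear evolution in its Fourier series,
\[
e^{itP(-i\partial_x)}g(x)=\sum_{k\in\mathbb Z}e^{itP(k)}\widehat g(k)e^{ikx},
\]
and reduce both halves of the proposition to estimates on the polynomial exponential sums $S_{N,M}(t,x):=\sum_{k=M}^{M+N}e^{itP(k)+ikx}$, exploiting the decay $|\widehat g(k)|\lesssim \langle k\rangle^{-1}$ that $g\in BV$ affords via integration by parts.

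For the $C^\alpha$ assertion with $\alpha<2^{1-d}$, I would invoke Weyl's inequality for polynomial exponential sums: the Khintchine theorem applied to the leading coefficient of $tP/(2\pi)$ guarantees that, for almost every $t$, one has $|S_{N,M}(t,x)|\lesssim_\varepsilon N^{1-2^{1-d}+\varepsilon}$ uniformly in $M$ and $x$. A dyadic Littlewood--Paley decomposition of the Fourier series, Abel summation against $\widehat g(k)$, and the pointwise bound $|e^{ikx}-e^{iky}|\lesssim \min(1,|k||x-y|)$ then combine in a routine manner to yield Hölder regularity of every exponent $\alpha<2^{1-d}$.

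For the fractal lower bound, the approach is a Borel--Cantelli argument based on Dirichlet's theorem which produces, for a.e. $t\notin\pi\mathbb Q$, a sequence of scales $2^{j_n}\to\infty$ at which $t$ is well approximated by rationals $2\pi p_n/q_n$ with $q_n$ comparable to $2^{2^{1-d}j_n}$. At these scales, $e^{itP(k)}$ approximately factors as a slowly varying phase times the Gauss-type sum $\sum_{r=1}^{q_n}e^{2\pi i p_n P(r)/q_n}$, whose modulus is bounded below by $\sqrt{q_n}$ precisely because $P$ has a nonzero even-degree coefficient (for purely odd $P$ the corresponding complete sum could be forced to vanish by antisymmetry). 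The hypothesis $g\notin H^{s_0+}$ then supplies a subsequence of frequencies $|k|\sim 2^{j_n}$ where $|\widehat g(k)|$ is not too small in an $\ell^2$ sense, and combining these two ingredients yields an $L^1$ lower bound on $\|P_{j_n}(\mathrm{Re}\,e^{itP(-i\partial_x)}g)\|_{L^1}$ (and similarly for the imaginary part) that contradicts membership in $B_{1,\infty}^{(2s_0-2^{1-d})+}$.

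The main obstacle is this second half: one must simultaneously (i)~establish the full-measure set of admissible Diophantine $t$, (ii)~justify the Gauss-sum factorization with error terms controllable on dyadic blocks of size $2^{j_n}$, and (iii)~pass from the $\ell^2$-type information that $g\notin H^{s_0+}$ to a genuine $L^1$ lower bound at the right scales \emph{separately} for the real and imaginary parts, which is the step where the non-odd hypothesis on $P$ is essential. The constraint $s_0<\tfrac12+2^{-d}$ is precisely what keeps the target Besov index $2s_0-2^{1-d}$ positive, so that failure of $B_{1,\infty}^{(2s_0-2^{1-d})+}$ is a nontrivial roughness statement.
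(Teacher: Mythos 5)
The paper offers no proof of this proposition: it is imported wholesale as \cite[Theorem 2.16]{ET-book} (the argument there descends from Oskolkov, Rodnianski, and Erdo\u{g}an--Shakan). Measured against that source, the first half of your sketch is essentially the right argument: $|\widehat g(k)|\lesssim\langle k\rangle^{-1}$ from bounded variation, Weyl's inequality together with a Khintchine/Borel--Cantelli step to get $\sup_{M,x}|S_{N,M}(t,x)|\lesssim_\varepsilon N^{1-2^{1-d}+\varepsilon}$ for almost every $t$, and Abel summation on dyadic blocks to conclude $\|P_j e^{itP(-i\partial_x)}g\|_{L^\infty}\lesssim 2^{-j(2^{1-d}-\varepsilon)}$, hence H\"older regularity of every exponent below $2^{1-d}$.

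The second half has a genuine gap. You propose to extract the $L^1$ lower bound from a \emph{lower} bound $|\sum_{r=1}^{q}e^{2\pi i pP(r)/q}|\gtrsim\sqrt q$ on complete exponential sums. No such bound exists for $d\ge 3$: Hua's and Weil's estimates are upper bounds only, and complete sums of cubic and higher polynomials modulo $q$ can vanish outright (e.g.\ $\sum_{r=0}^{4}e^{2\pi i r^{3}/5}=0$, since cubing permutes $\mathbb Z/5$); a nonzero even-degree coefficient does not repair this, and even if the multiplier sums were large you would still need to thread the unknown coefficients $\widehat g(k)$ through the factorization to reach an $L^1$ bound on $P_j u$. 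The actual proof needs no exponential-sum lower bounds: the multiplier is unimodular, so $\|P_ju(t)\|_{L^2}=\|P_jg\|_{L^2}$, and the elementary inequality $\|P_jw\|_{L^2}^2\le\|P_jw\|_{L^1}\|P_jw\|_{L^\infty}$ combined with the $L^\infty$ \emph{upper} bound already obtained in the first half gives $\|P_ju\|_{L^1}\gtrsim \|P_jg\|_{L^2}^2\,2^{j(2^{1-d}-\varepsilon)}$; the hypothesis $g\notin H^{s_0+}$ forces $\|P_jg\|_{L^2}\gtrsim 2^{-j(s_0+\varepsilon)}$ along a subsequence, whence $\limsup_j 2^{js'}\|P_ju\|_{L^1}=\infty$ for every $s'>2s_0-2^{1-d}$. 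The non-odd hypothesis likewise has nothing to do with Gauss sums: it is what lets you run this argument for $\mathrm{Re}\,u=(u+\bar u)/2$ and $\mathrm{Im}\,u$ separately, because $\widehat{\bar u}(k)=e^{-itP(-k)}\overline{\widehat g(-k)}$, and when $P$ is odd the phases $e^{itP(k)}$ and $e^{-itP(-k)}$ coincide so that $\|P_j\mathrm{Re}\,u\|_{L^2}$ can degenerate for all $t$; when $P$ is not odd one averages in $t$ over the nontrivial phase $t(P(k)+P(-k))$ to keep $\|P_j\mathrm{Re}\,u\|_{L^2}\gtrsim\|P_jg\|_{L^2}$ along a subsequence for almost every $t$. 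A small further point: the constraint $s_0<\frac12+2^{-d}$ keeps the index $2s_0-2^{1-d}$ strictly \emph{below} $1$, as required by Lemma \ref{FDL}; positivity is automatic from $s_0\ge\frac12$.
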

	
	Next we show that the lower bound of $D$ is $9/4-2\sigma_0$.
	Combining \eqref{lin:reg} and Proposition \ref{linear} with $d=3$ and $\alpha=\frac14-$, we have 
	$$\mathrm{Re}( e^{\partial_x^3t-i\partial_x^2t-t-4\pi i\int_0^t\|u\|_{L^2}^2ds}u_0),\,
	\mathrm{Im}( e^{\partial_x^3t-i\partial_x^2t-t-4\pi i\int_0^t\|u\|_{L^2}^2ds}u_0)
	\notin B^{(2\sigma_0-\frac14)+}_{1,\infty}$$
	for almost all $t\notin\pi\mathbb Q$.
	But, $\mathcal N(t,\cdot)\in B_{1,\infty}^{\frac32-}$ by \eqref{g} and the embedding $H^\alpha \hookrightarrow B_{1,\infty}^{\alpha}$ for $\alpha\ge0$. 
	Hence, $\mathrm{Re}\,u(t,\cdot),\,\mathrm{Im}\,u(t,\cdot) \notin B_{1,\infty}^{(2\sigma_0-\frac14)+}$ because   
	$B^{\frac32-}_{1,\infty}   \hookrightarrow B^{(2\sigma_0-\frac14)+}_{1,\infty}$ from $2\sigma_0-\frac14<\frac32$.
	Consequently, by Lemma \ref{FDL}, we conclude that both the real and imaginary parts of $u(t,\cdot)$ have upper Minkowski dimension at least $2-(2\sigma_0-\frac14)=9/4-2\sigma_0$.

\section{Smoothing estimate}\label{sec3}
This section is devoted to proving Proposition \ref{smoothing}. 
From \eqref{res}, \eqref{rho} and \eqref{Rr}, we first notice that 
$|u|^2u=4\pi\|u\|_{L^2}^2u+\rho(u)+R(u)$.
Substituting this and the transformation\footnote{The transformation is used to eliminate the problematic exponential factor from $\mathcal{N}(t,x)$, but dealing with the forcing term can still be somewhat cumbersome since the factor remains in front of the forcing term in \eqref{LLE-D}.}
\begin{equation}\label{trans}
	u(t,x)=e^{4\pi i\int_0^t\|v(t_1)\|_{L^2}^2dt_1}v(t,x)
\end{equation}
into \eqref{TLL}, we then see  
\begin{equation*}
	\begin{cases}
		\partial_t v = \partial_x^3 v + i \partial_x^2 v - v + e^{-4\pi i \int_0^t \|v\|_{L^2}^2 dt_1} u_0 + i(\rho(v)+R(v)), \\
		v(0,x)=u_0(x),
	\end{cases}
\end{equation*}
and by Duhamel's formula
\begin{align}\label{LLE-D}
v(t,&x)=e^{(\partial_x^3 + i \partial_x^2 -1)t}u_0\nonumber\\
&+\int_0^te^{(\partial_x^3 + i \partial_x^2 -1)(t-t_1)}\big(e^{-4\pi i\int_0^{t_1}\|v\|_{L^2}^2dt_2}u_0+i(\rho(v)+R(v))(t_1)\big)\, dt_1.
\end{align}
Since
$\|u(t)\|_{L^2} = \|v(t)\|_{L^2}$, 
from \eqref{trans} and \eqref{TLL-D} we also see
\begin{equation*}
	v(t,x)=e^{-4\pi i\int_0^t\|u\|_{L^2}^2dt_1}u(t,x)=e^{(\partial_x^3+i\partial_x^2-1)t}u_0+e^{-4\pi i\int_0^t\|u\|_{L^2}^2dt_1}\mathcal{N}(t,x).
\end{equation*}
By comparing this to \eqref{LLE-D} and noticing 
$$\big\|e^{-4\pi i\int_0^t\|u\|_{L^2}^2dt_1}\mathcal{N}(t,x)\big\|_{H^s}=\big\|\mathcal{N}(t,x)\big\|_{H^s},$$
it suffices to prove the smoothing property \eqref{nonsmo} for the Duhamel term in \eqref{LLE-D} instead of $\mathcal N(t,x)$.
To do so, we need to make use of the following multilinear estimates for $\rho(v)$, $R(v)$ and $e^{-4\pi i \int_0^{t}\|v\|_{L^2}^2 dt_1} u_0$:
\begin{lem}\label{mul}
	Let $\frac12<b\le 1$ and $s> \frac{b}{2}-\frac14$. 
	For $a\le\min\{2s+1-2b, 2-2b\}$, we have
	\begin{equation}\label{ff2}
		\|\rho(v)\|_{H^{s+a}}\lesssim \|v\|_{H^s}^3,
	\end{equation}
	\begin{equation}\label{ff}
		\left\|e^{-4\pi i\int_0^{t}\|v(t')\|_{L^2}^2dt'}u_0\right\|_{X_\delta^{s+a,b-1}}\lesssim \|u_0\|_{H^s},
	\end{equation}
	and
	\begin{equation}\label{ff3}
		\|R(v)\|_{X_\delta^{s+a,b-1}}\lesssim \|v\|_{X_{\delta}^{s,b}}^3.
	\end{equation}
\end{lem}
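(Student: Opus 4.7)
Lemma \ref{mul} packages three estimates of quite different character. The pointwise bound \eqref{ff2} and the forcing estimate \eqref{ff} are fairly direct, while \eqref{ff3} is the genuine trilinear $X^{s,b}$-estimate that carries the analytic core of the proof. I would prove them in that order.

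For \eqref{ff2}, the identity $\widehat{\rho(v)}(k)=-|\hat v(k)|^2\hat v(k)$ gives
\[
\|\rho(v)\|_{H^{s+a}}^2=\sum_k\langle k\rangle^{2(s+a)}|\hat v(k)|^6\le\|v\|_{H^s}^4\sum_k\langle k\rangle^{2a-2s}|\hat v(k)|^2,
\]
using the trivial bound $|\hat v(k)|^2\le\langle k\rangle^{-2s}\|v\|_{H^s}^2$ on four of the six factors. Since $b>1/2$ forces $a\le 2s+1-2b<2s$, one has $2a-2s\le 2s$, so the last sum is bounded by $\|v\|_{H^s}^2$, closing the estimate.

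For \eqref{ff}, set $G(t):=e^{-4\pi i\int_0^t\|v(t')\|_{L^2}^2dt'}$ and pick $\eta\in C_c^\infty([-2,2])$ equal to $1$ on $[-1,1]$ to realise the restricted norm. The key structural observation is that $G(t)u_0(x)$ is a tensor in $t$ and $x$, so its space-time Fourier transform factorises as $\widehat{\eta G}(\tau)\,\hat u_0(k)$. Because the embedding $X_\delta^{s,b}\hookrightarrow C([-\delta,\delta];L_x^2)$ (valid for $b>1/2$) guarantees that $t\mapsto\|v(t)\|_{L^2}^2$ is continuous, $\eta G\in C_c^1$, and therefore $|\widehat{\eta G}(\tau)|\lesssim\langle\tau\rangle^{-1}$. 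Inserting this into the $X^{s+a,b-1}$-norm and integrating in $\tau$ via Lemma \ref{lem3} (with $\beta=2$ and $\gamma=2-2b$) yields a factor $\langle k^3+k^2\rangle^{2b-2}\approx\langle k\rangle^{6b-6}$, reducing the estimate to $2(s+a)+6b-6\le 2s$, which holds because $a\le 2-2b\le 3-3b$ under the hypothesis.

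The main obstacle is the trilinear estimate \eqref{ff3}. I would proceed by duality, reducing to bounding
\[
\Bigl|\sum_{k,k_1,k_2}\int\widetilde v(\tau_1,k_1)\overline{\widetilde v(\tau_2,k_2)}\widetilde v(\tau_3,k_3)\,\frac{\langle k\rangle^{s+a}\,\overline{\widetilde w(\tau,k)}}{\langle\tau+k^3+k^2\rangle^{1-b}}\,d\tau\,d\tau_1\,d\tau_2\Bigr|
\]
(with $k_3=k-k_1+k_2$, $\tau_3=\tau-\tau_1+\tau_2$, exclusions $k_1\ne k$, $k_2\ne k_1$, and $\|w\|_{L^2_\tau\ell^2_k}\le 1$) by $\|v\|_{X^{s,b}_\delta}^3$. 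After multiplying and dividing by the three input modulations $\langle\tau_j+k_j^3+k_j^2\rangle^b$, applying Cauchy--Schwarz, and integrating out the modulation variables by iterated use of Lemma \ref{lem3}, the problem reduces to a weighted frequency-space sum whose kernel is governed by the resonance function
\[
H=(k_1^3+k_1^2)-(k_2^3+k_2^2)+(k_3^3+k_3^2)-(k^3+k^2).
\]
Using $k=k_1-k_2+k_3$ together with the identity $p^3+q^3-r^3-s^3=3(p+q)(p-r)(p-s)$ valid when $p+q=r+s$, one computes
\[
H=-(k_1-k_2)(k_3-k_2)\bigl(3(k+k_2)+2\bigr);
\]
the exclusions force $|k_1-k_2|,|k_3-k_2|\ge 1$ and the last bracket is a non-zero integer, so $|H|\gtrsim 1$ and, in the generic regime, $|H|\sim|k_{\max}|^3$. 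The hard part is the precise bookkeeping of this cubic gain against the loss $\langle k\rangle^{a}$ and the negative Sobolev weights $\langle k_j\rangle^{-s}$ generated by passing to $L^2$-densities. A Littlewood--Paley decomposition of the four frequencies together with a case split according to which pair is comparable reduces each case to an off-diagonal sum that converges precisely under the hypotheses $a\le 2s+1-2b$, $a\le 2-2b$, and $s>b/2-1/4$. The overall scheme would follow the trilinear $X^{s,b}$-estimates in \cite{CET,ES,ET1} for the ordinary Lugiato--Lefever equation, with the extra $k^2$-term in the dispersion contributing only a harmless lower-order correction to $H$.
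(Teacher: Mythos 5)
Your treatments of \eqref{ff2} and \eqref{ff} are correct and essentially the paper's: for \eqref{ff2} the paper uses $\|\langle k\rangle^{(s+a)/3}\hat v\|_{l^6}^3\lesssim\|\langle k\rangle^{s}\hat v\|_{l^2}^3$ for $a\le 2s$, which is the same $\ell^2$-to-$\ell^6$ trade you perform by peeling off four factors; for \eqref{ff} the paper likewise uses the tensor structure, the $C^1$ decay $|\widehat{\eta G}(\tau)|\lesssim\langle\tau\rangle^{-1}$, and Lemma \ref{lem3} with $\beta=2$, $\gamma=2-2b$ to land on the condition $a\le 3-3b$. Your setup for \eqref{ff3} is also on track: Cauchy--Schwarz against the input modulations, integration in $\tau_1,\tau_2$ via Lemma \ref{lem3}, and the resonance factorization, which (using $k_3-k_2=k-k_1$) agrees with the paper's $(k-k_1)(k_1-k_2)(3k+3k_2+2)$, never vanishing thanks to the $+2$ coming from the $i\partial_x^2$ term.

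The gap is that for \eqref{ff3} you stop exactly where the lemma's content lives. After the $\tau$-integrations the resonance factor is available only to the power $2-2b$ (not the full $|H|\sim|k_{\max}|^3$ your heuristic suggests), and the paper splits $\langle H\rangle^{2-2b}\gtrsim\langle k-k_1\rangle^{2-2b}\langle k_1-k_2\rangle^{2-2b}\langle k+k_2\rangle^{2-2b}$ to reduce the matter to proving
\begin{equation*}
\sup_k\;\langle k\rangle^{2s+2a}\sum_{\substack{k_1\neq k,\,k_2\neq k_1}}\frac{\langle k_1\rangle^{-2s}\langle k_2\rangle^{-2s}\langle k-k_1+k_2\rangle^{-2s}}{\langle k-k_1\rangle^{2-2b}\langle k_1-k_2\rangle^{2-2b}\langle k+k_2\rangle^{2-2b}}<\infty.
\end{equation*}
Verifying this is the bulk of the argument: a case split according to which of $|k_1|,|k_2|,|k-k_1+k_2|$ dominates $|k|$, repeated applications of Lemma \ref{lem3} with the attendant sub-cases on $\phi_\beta$ (including a logarithmic endpoint), and a separate, cruder treatment of the range $\frac b2-\frac14<s\le\frac12$ where $\langle k+k_2\rangle^{2-2b}$ is discarded. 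This computation is precisely what produces the two caps $a\le 2-2b$ and $a\le 2s+1-2b$ and the threshold $s>\frac b2-\frac14$; asserting that the ``off-diagonal sum converges precisely under the hypotheses'' by analogy with the cubic NLS literature does not establish these exponents, especially since the exclusions $k_1\ne k$, $k_2\ne k_1$ and the nonvanishing of $3(k+k_2)+2$ (absent for pure cubic dispersion) change which regions are dangerous. You need to carry out the summation to complete the proof.
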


We will postpone the proof of this lemma to the next section and continue with the proof of Proposition \ref{smoothing}.
Using the embedding 
\begin{equation}\label{emb}
	X_{\delta}^{s,b} \subset C_t H_x^s([-\delta,\delta]\times\mathbb T), \quad b>1/2,
\end{equation}
the estimate \eqref{inhomo}, and Lemma \ref{mul} in turn, 
we see that 
\begin{align}\label{duhamel}
\nonumber
&\Big\|\int_0^t e^{(\partial_x^3+i\partial_x^2-1)(t-t_1)}\big(e^{-4\pi i\int_0^{t_1}\|v\|_{L^2}^2dt_2}u_0+i(\rho(v)+R(v))(t_1)\big)dt_1\Big\|_{H^{s+a}}\\
\nonumber
&\lesssim\int_0^t\|\rho(v)(t_1)\|_{H^{s+a}}dt_1\\
\nonumber
&\qquad +\Big\|\int_0^t e^{(\partial_x^3+i\partial_x^2-1)(t-t_1)}\big(e^{-4\pi i\int_0^{t_1}\|v\|_{L^2}^2dt_2}u_0+iR(v)(t_1)\big)dt_1\Big\|_{X_\delta^{s+a,b}}\\
\nonumber
&\lesssim \big\|e^{-4\pi i\int_0^{t}\|v\|_{L^2}^2dt_1}u_0\big\|_{X_\delta^{s+a,b-1}}+\|R(v)\|_{X_\delta^{s+a,b-1}}+\int_0^t\|\rho(v)(t_1)\|_{H^{s+a}}dt_1\\
&\lesssim\|u_0\|_{H^s}+\|v\|_{X_\delta^{s,b}}^3+\delta\|v\|_{H^s}^3
\end{align}
under the assumptions on $b,s,a$ in Lemma \ref{mul}.
By the embedding \eqref{emb} the last two terms in \eqref{duhamel} are bounded by $C\|v\|_{X_\delta^{s,b}}^3$. 
Therefore, if we show that 
\begin{equation}\label{vu0}
\|v\|_{X_\delta^{s,b}}\lesssim\|u\|_{X_\delta^{s,b}}
\end{equation} 
for $b,s,\delta$ given in Theorem \ref{wellp}, the desired property \eqref{nonsmo}
is now proven by virtue of \eqref{prop2}. 
As $b$ approaches $1/2$, the ranges of $s$ and $a$ in Lemma \ref{mul} become the widest, and the ranges used in Proposition \ref{smoothing} follow accordingly.

It remains to prove \eqref{vu0}.
Let $\eta(t) \in C_0^{\infty}(\mathbb{R})$ be a smooth function compactly supported in $(-2\delta,2\delta)$ with $\eta=1$ on $[-\delta,\delta]$. 
By the definition of $X_\delta^{s,b}$ and \eqref{trans}, it is enough to show that
\begin{equation}\label{vu00} 
\Big\|\eta(t) e^{-4\pi i\int_0^t\|u\|_{L^2}^2\,dt_1} w\Big\|_{X^{s,b}}\lesssim \| w\|_{X^{s,b}}
\end{equation}
for all $w\in X^{s,b}$ such that $w=u$ on $[-\delta,\delta]$.
Set $\zeta(t):=\eta(t)e^{-4\pi i\int_0^t\|u\|_{L^2}^2\,dt_1}$.
Then,
\begin{align*}
\|\zeta(t)w\|_{X^{s,b}}^2&=\sum_k\int_{\mathbb{R}}\lgl k\rgl^{2s}\lgl\tau+k^3+k^2\rgl^{2b}\left|\int_{\mathbb{R}}\widehat{\zeta}(\tau-\tau_1) \, \widetilde{w}(\tau_1,k) \,d\tau_1\right|^2d\tau\\
&=\sum_k\lgl k\rgl^{2s}\int_{\mathbb{R}}\left|\int_{\mathbb{R}}\lgl\tau+k^3+k^2\rgl^{b}\widehat{\zeta}(\tau-\tau_1)\widetilde{w}(\tau_1,k)d\tau_1\right|^2d\tau.
\end{align*}
Using the inequality $\lgl\tau+k^3+k^2\rgl\lesssim\lgl\tau-\tau_1\rgl\lgl\tau_1+k^3+k^2\rgl$ and Young's convolution inequality, we have
\begin{align*}
\|\zeta(t)w\|_{X^{s,b}}^2&\lesssim\big\|\lgl\tau\rgl^b\widehat{\zeta}(\tau)\big\|_{L_\tau^1}^2\big\|\lgl k\rgl^{s}\lgl\tau+k^3+k^2\rgl^{b}\, \widetilde{w}(\tau,k)\big\|_{L_\tau^2 l_k^2}^2\\
&=\big\|\lgl\tau\rgl^b\widehat{\zeta}(\tau)\big\|_{L_\tau^1}^2\|w\|_{X^{s,b}}^2.
\end{align*}
Now \eqref{vu00} follows if we show that the $L_{\tau}^1$-norm here is finite. For this we note that
\begin{equation}\label{c1}
    \zeta(t)\in C^2(\mathbb R).
\end{equation}
Since $\zeta(t)$ is compactly supported, this implies 
$|\widehat \zeta(\tau)|\lesssim \lgl \tau\rgl^{-2}$ (see \cite{W}),
so that 
$$\big\|\lgl\tau\rgl^b\widehat{\zeta}(\tau)\big\|_{L_\tau^1}\lesssim \int_{\mathbb R}\lgl \tau\rgl^{b-2}\, d\tau<\infty$$
whenever $b<1$.
Finally, the continuity \eqref{c1} follows from $u\in C_t^1([-2\delta,2\delta];L^2(\mathbb T))$.
By the equation \eqref{TLL}, we indeed see
\begin{align*}
\frac{d}{dt}\|u\|_{L^2}^2=\frac{d}{dt}\int_{\mathbb T}u\bar u\, dx&=2 \Re\int_{\mathbb T}\bar u\partial_t udx\\
&=2\Re\int_{\mathbb T}\partial_x^3 u\bar u + i \partial_x^2 u\bar u - u\bar u +i |u|^4 +u_0\bar u  \,dx,
\end{align*}
and by integration by parts
$$\frac{d}{dt}\|u\|_{L^2}^2=-2\|u(t)\|_{L^2}^2+2\Re\int_{\mathbb T}u_0\bar u\, dx.$$
The right side (and similarly the left side) is continuous because Theorem \ref{wellp} guarantees that $u\in C_t([-2\delta,2\delta];L^2(\mathbb T))$.

\section{Proof of Lemma \ref{mul}}\label{sec5}
In this final section, we will prove Lemma \ref{mul}, which provides estimates for $e^{-4\pi i \int_0^{t}\|v\|_{L^2}^2 dt'} u_0$, $\rho$, and $R$. We will first focus on $e^{-4\pi i \int_0^{t}\|v\|_{L^2}^2 dt'} u_0$ and $\rho$, which are relatively simpler to handle.

\subsection{Proofs of \eqref{ff2} and \eqref{ff}}
	For $a\leq 2s$, using the inclusion property for $l^p$-spaces we obtain \eqref{ff2} as
	\begin{equation*}
		\|\rho(v)\|_{H^{s+a}}=\|\langle k \rangle^{s+a} |\hat v(k)|^3\|_{l_k^2}=\|\langle k \rangle^{\frac{s+a}{3}} \hat v(k)\|^3_{l_k^6}
		\lesssim\|\langle k\rangle^s \hat v(k)\|_{l_k^2}^3.
	\end{equation*}
 
	The proof of \eqref{ff} is similar to that of \eqref{force} in Proposition \ref{prop3}. 
	Since $\eta(t)e^{-4\pi i\int_0^{t}\|v\|_{L^2}^2dt'}\in C_t^1$ is compactly supported, we first see $$\Big|\reallywidehat{\eta(t)e^{-4\pi i\int_0^{t}\|v\|_{L^2}^2dt'}}(\tau)\Big|\lesssim\langle\tau\rangle^{-1}$$
 which implies 
	\begin{align*}
\big\|e^{-4\pi i\int_0^{t}\|v\|_{L^2}^2dt'}u_0\big\|^2_{X_\delta^{s+a,b-1}}
& \le\big\|\eta(t)e^{-4\pi i\int_0^{t}\|v\|_{L^2}^2dt'}u_0\big\|^2_{X^{s+a,b-1}}\\
&\lesssim\sum_{k\in\mathbb{Z}} \int_{\mathbb R}\langle k\rangle^{2s+2a}\langle\tau+ k^3+ k^2\rangle^{2b-2}\langle \tau\rangle^{-2}|\widehat{u_0}(k)|^2d\tau.
\end{align*}
Applying Lemma \ref{lem3} with $\beta=2$ and $\gamma=2-2b$ to the integration in $\tau$, we now get
	\begin{align*}
\big\|e^{-4\pi i\int_0^{t}\|v\|_{L^2}^2dt'}u_0\big\|^2_{X_\delta^{s+a,b-1}}
&\lesssim\sum_{k\in\mathbb{Z}} \langle k\rangle^{2s+2a}\langle k^3+k^2\rangle^{2b-2}|\widehat{u_0}(k)|^2\\
&\lesssim\sum_{k\in\mathbb{Z}} \langle k\rangle^{2s+2a+6b-6}|\widehat{u_0}(k)|^2\\
&\lesssim\|u_0\|_{H^s}^2
\end{align*}
provided $a\leq 3-3b$.

\subsection{Proof of \eqref{ff3}}
Lastly, we handle $R$. By the definition of the restricted space $X_{\delta}^{s,b}$, we may show \eqref{ff3} with $X^{s,b}$ rather than $X_{\delta}^{s,b}$.
First set
	\begin{align*}	
		f_i(\tau_i,k_i)&:=\langle k_i \rangle^s\langle \tau_i+k_i^3+k_i^2\rangle^b|\tilde v(\tau_i, k_i)|, \quad i=1,2,3, \\
		M(k_1,k_2,k_3,\tau_1,\tau_2,\tau_3)&:=
		\frac{\langle k\rangle^{s+a}\langle \tau +k^3+ k^2 \rangle^{b-1} \langle k_1 \rangle^{-s}\langle k_2 \rangle^{-s}\langle k_3 \rangle^{-s}}{\langle\tau_1+k_1^3+ k_1^2\rangle^{b}\langle \tau_2+k_2^3+ k_2^2\rangle^{b}\langle \tau_3+k_3^3+ k_3^2\rangle^{b}}.
	\end{align*}
	Then we write  
	\begin{align}\nonumber
		&\big\|R(v)\big\|_{X^{s+a,b-1}}^2\\
		\nonumber
		&=\Big\|\langle k\rangle^{s+a}\langle \tau+k^3+ k^2\rangle^{b-1}\\
		\nonumber
		&\qquad\times \int_{\tau_1-\tau_2+\tau_3=\tau}\sum_{\substack{k_1-k_2+k_3=k\\k_1\neq k,\,k_2\neq k_1}}\tilde{v}(\tau_1,k_1)\bar{\tilde{v}}(\tau_2,k_2)\tilde{v}(\tau_3,k_3)d\tau_1d\tau_2\Big\|_{L_{\tau}^2l_k^2}^2\\
		\label{R}
		&=\Big\| \int_{\tau_1-\tau_2+\tau_3=\tau}\sum_{\substack{k_1-k_2+k_3=k\\k_1\neq k,\,k_2\neq k_1}}Mf_1(\tau_1,k_1)f_2(\tau_2,k_2)f_3(\tau_3,k_3)d\tau_1 d\tau_2\Big\|_{L_{\tau}^2l_k^2}^2.
	\end{align}
Applying the Cauchy-Schwarz inequality, \eqref{R} is bounded by
\begin{align*}
\sup_{\tau,k}&\int_{\tau_1-\tau_2+\tau_3=\tau}\sum_{\substack{k_1-k_2+k_3=k\\k_1\neq k,\,k_2\neq k_1}}M^2d\tau_1d\tau_2\\
&\times\bigg\|\int_{\tau_1-\tau_2+\tau_3=\tau}\sum_{k_1-k_2+k_3=k}|f_1f_2f_3|^2d\tau_1d\tau_2\bigg\|_{L_{\tau}^1l_k^1}.
\end{align*}
The $L^1$ norm here is bounded by  
Young's convolution inequality as 
	\begin{align*}
		\bigg\|
		\int_{\tau_1-\tau_2+\tau_3=\tau}\sum_{k_1-k_2+k_3=k}|f_1f_2f_3|^2d\tau_1d\tau_2\bigg\|_{L_{\tau}^1l_k^1}
&=	\|\big(f_1^2\ast_{\tau,k} f_2^2\ast_{\tau,k} f_3^2\big) (\tau,k)\|_{L_{\tau}^1l_k^1}\\
&\leq \||f|^2\|_{L_{\tau}^1l_k^1}^3
  =\|v\|_{X^{s,b}}^6.
	\end{align*}
All we have to do now is to show that 
	\begin{equation}\label{M}
		\sup_{\tau,k}\int_{\tau_1-\tau_2+\tau_3=\tau}\sum_{\substack{k_1-k_2+k_3=k\\k_1\neq k,\,k_2\neq k_1}}M(k_1,k_2,k_3,\tau_1,\tau_2,\tau_3)^2 d\tau_1d\tau_2
	\end{equation}
 is finite.

To bound the integration with respect to $\tau_2,\tau_1$ first, we use Lemma \ref{lem3} twice with $\beta=\gamma=2b>1$ as follows:
	\begin{align}
		\nonumber
		&\int_{\tau_1} \frac{1}{\langle \tau+k^3+k^2 \rangle^{2-2b}\langle \tau_1+k_1^3+k_2^2\rangle^{2b} }\\
		\nonumber
		&\quad\qquad\times \int_{\tau_2} \frac{1}{\langle \tau_2 + k_2^3+ k_2^2\rangle^{2b}\langle \tau-\tau_1+\tau_2 + k_3^3+ k_3^2\rangle^{2b}}d\tau_2d\tau_1\\
		\nonumber
		&\lesssim \int_{\tau_1}  \frac{1}{\langle \tau+k^3+k^2 \rangle^{2-2b}\langle \tau_1+k_1^3+k_2^2\rangle^{2b}\langle \tau_1-\tau +k_2^3+k_2^2-k_3^3-k_3^2 \rangle^{2b}} d\tau_1 \\
		\label{R1}
		&\lesssim \frac{1}{\langle \tau+k^3+k^2 \rangle^{2-2b}\langle \tau +k_1^3-k_2^3+k_3^3+k_1^2-k_2^2+k_3^2 \rangle^{2b}}.
	\end{align}
Then, by substituting $k_3=k-k_1+k_2$ into the right side of \eqref{R1} and using
the simple inequality $\langle \tau+m\rangle\langle \tau+n\rangle \gtrsim \langle m-n \rangle$, 
it is bounded by 
	\begin{equation*}
		\frac{1}{\langle (k-k_1)(k_1-k_2)(3k+3k_2+2)\rangle^{2-2b}}. 
	\end{equation*}
Since $(k-k_1)(k_1-k_2)(3k+3k_2+2)$ is non-zero and $|3(k+k_2)+2|\ge|k+k_2|$ for all $k,k_2\in\mathbb{Z}$, this is bounded again by
	\begin{equation*}
		\frac{1}{\langle k-k_1\rangle^{2-2b}\langle k_1-k_2\rangle^{2-2b}\langle k+k_2\rangle^{2-2b}}.
	\end{equation*}
Therefore, 
	\begin{equation}
\eqref{M}\lesssim \sup_k\sum_{\substack{k_1\neq k\\k_2\neq k_1}}\frac{\langle k\rangle^{2s+2a} \langle k_1\rangle^{-2s} \langle k_2 \rangle^{-2s}\langle k-k_1+k_2\rangle^{-2s} }{\langle k-k_1\rangle^{2-2b}\langle k_1-k_2\rangle^{2-2b}\langle k+k_2\rangle^{2-2b}}.\label{sum}
\end{equation}
	
Let us now bound the summation in \eqref{sum}.
	We first consider the case $s>\frac12$ and then divide the matter into three cases, $|k-k_1+k_2|\gtrsim|k|$, $|k_1|\gtrsim|k|$ and $|k_2|\gtrsim|k|$.
	
 \begin{flalign*}
		(\romannum{1})\quad |k-k_1+k_2|\gtrsim|k|&&
	\end{flalign*}
In this case, we replace $\langle k-k_1+k_2\rangle^{-2s}$ by
$\langle k\rangle^{-2s}$, and 
use the inequality $\langle k+k_2\rangle\langle k_2\rangle\gtrsim\langle k\rangle$ after multiplying $\langle k_2\rangle^{2-2b}$ to both numerator and denominator. Then, \eqref{sum} is bounded by
	\begin{align}\label{kkk2}
		\sum_{k_1,k_2}\frac{\langle k\rangle^{2a-2+2b} \langle k_1\rangle^{-2s} \langle k_2 \rangle^{-2s+2-2b}}{\langle k-k_1\rangle^{2-2b}\langle k_1-k_2\rangle^{2-2b}}.
	\end{align}
 
Applying Lemma \ref{lem3} to $k_2$-summation with $\beta=2-2b$ and $\gamma=2s-2+2b$, we obtain 
\begin{align*}
		\eqref{kkk2}\lesssim
		\begin{cases}
			\sum_{k_1}\dfrac{\langle k\rangle^{2a-2+2b}}{\langle k_1\rangle^{2s}\langle k-k_1\rangle^{2-2b}}\langle k_1\rangle^{-2s+2-2b}\phi_{2-2b}(k_1)\qquad s<2-2b,\\
			\sum_{k_1}\dfrac{\langle k\rangle^{2a-2+2b}}{\langle k_1\rangle^{2s}\langle k-k_1\rangle^{2-2b}}\langle k_1\rangle^{2b-2}\phi_{2s-2+2b}(k_1)\qquad \,\,\,\,2-2b\le s,
		\end{cases}
	\end{align*}
depending on the order relation between $2-2b$ and $2s-2+2b$. Since we are assuming $\frac12 <b\le 1$ and considering $s>\frac12$, the condition to guarantee Lemma \ref{lem3} 
(i.e., $\beta,\gamma\ge0$, $\beta+\gamma>1$) is satisfied.
We calculate $\phi_{2-2b}(k_1), \phi_{2s-2+2b}(k_1)$ depending on the situation to yield
	\begin{align*}
		\eqref{kkk2}\lesssim
		\begin{cases}
			\sum_{k_1}\dfrac {\langle k\rangle^{2a-2+2b}}{\langle k_1\rangle^{4s-1}\langle k-k_1\rangle^{2-2b}}\qquad\qquad\quad s<2-2b,\\
			\sum_{k_1}\dfrac{\langle k\rangle^{2a-2+2b}}{\langle k_1\rangle^{2s+2-2b}\langle k-k_1\rangle^{2-2b}}\quad \quad2-2b\le s,\ \frac32-b<s,\\
			\sum_{k_1}\dfrac{\langle k\rangle^{2a-2+2b}}{\langle k_1\rangle^{2s+2-2b-\epsilon}\langle k-k_1\rangle^{2-2b}} \quad2-2b\le s,\ \frac32-b=s,\\
			\sum_{k_1}\dfrac{\langle k\rangle^{2a-2+2b}}{\langle k_1\rangle^{4s-1}\langle k-k_1\rangle^{2-2b}}\, \qquad \quad2-2b\le s,\ \frac32-b>s.
		\end{cases}
	\end{align*}
In the third case here, $\log(1+\langle k_1\rangle)$ term appears but is replaced by $\langle k_1\rangle^{\epsilon}$ for sufficiently small $\epsilon>0$. 

Finally, we apply Lemma \ref{lem3} again to $k_1$-summation with $\gamma=2-2b$ and $\beta=4s-1, 2s+2-2b, 2s+2-2b-\epsilon$, and $4s-1$ respectively for each case:
	\begin{align*}
		\eqref{kkk2}\lesssim
		\begin{cases}
			\dfrac{\langle k\rangle^{2a-2+2b}}{\langle k\rangle^{2-2b}}\phi_{4s-1}(k)\qquad \qquad \quad s<2-2b,\\
			\dfrac{\langle k\rangle^{2a-2+2b}}{\langle k\rangle^{2-2b}}\phi_{2s+2-2b}(k)\quad \quad2-2b\le s,\ \frac32-b<s,\\
			\dfrac{\langle k\rangle^{2a-2+2b}}{\langle k\rangle^{2-2b}}\phi_{2s+2-2b-\epsilon}(k)\quad2-2b\le s,\ \frac32-b=s,\\
			\dfrac{\langle k\rangle^{2a-2+2b}}{\langle k\rangle^{2-2b}}\phi_{4s-1}(k)\,\qquad \quad2-2b\le s,\ \frac32-b>s.
		\end{cases}
	\end{align*}
From the assumptions $s>\frac12$ and $\frac12<b\le 1$, we have $4s-1>1$ and $2s+2-2b>1$, thus $\beta+\gamma>1$, $\beta\ge\gamma\ge 0$ and $\beta>1$ for each case.
Since $\beta>1$, all the functions $\phi_\beta(k)$ above are bounded by some constant, 
and therefore 
$\eqref{kkk2}\lesssim \langle k\rangle^{2a-4+4b}$ which is finitely bounded when $a\le 2-2b$.

	\begin{flalign*}
		(\romannum{2})\quad |k_1|\gtrsim|k|&&
	\end{flalign*}
	In this case, we bound the sum \eqref{sum} by
	\begin{align}\label{k1}
		\langle k\rangle^{2a}\sum_{k_1,k_2}\frac{ \langle k_2\rangle^{-2s} \langle k-k_1+k_2 \rangle^{-2s}}{\langle k-k_1\rangle^{2-2b}\langle k_1-k_2\rangle^{2-2b}\langle k+k_2\rangle^{2-2b}}
	\end{align}	
 replacing $\langle k_1\rangle^{-2s}$ by $\langle k\rangle^{-2s}$.
 We now multiply $\langle k_2\rangle^{2-2b}$ to both numerator and denominator and replace variables by $n_1=k-k_1+k_2$ and $n_2=k_2$. Then, \eqref{k1} becomes  
	\begin{align*}
		\sum_{n_1,n_2}\frac{\langle k\rangle^{2a-2+2b} \langle n_1\rangle^{-2s}\langle n_2 \rangle^{-2s+2-2b} }{\langle n_1-n_2\rangle^{2-2b}\langle k-n_1\rangle^{2-2b}}.
	\end{align*}
Note that this is equivalent to \eqref{kkk2} and we are done.
	
	\begin{flalign*}
		(\romannum{3})\quad |k_2|\gtrsim|k|&&
	\end{flalign*}
	Similarly as before, the sum \eqref{sum} is bounded by
	\begin{align}\label{k2}
		\langle k\rangle^{2a}\sum_{k_1,k_2}\frac{ \langle k_1\rangle^{-2s} \langle k-k_1+k_2 \rangle^{-2s}}{\langle k-k_1\rangle^{2-2b}\langle k_1-k_2\rangle^{2-2b}\langle k+k_2\rangle^{2-2b}}.
	\end{align}
We first eliminate $\langle k+k_2\rangle^{2-2b}$ in the denominator by some constant $C$ and replace variables by $n_1=k_1$ and $n_2=k-k_1+k_2$.  Then, \eqref{k2} becomes
	\begin{align}\label{k22}
		\sum_{n_1,n_2}\frac{\langle k\rangle^{2a} \langle n_1\rangle^{-2s}\langle n_2 \rangle^{-2s} }{\langle k-n_1\rangle^{2-2b}\langle k-n_2\rangle^{2-2b}}.
	\end{align}
	Note that $0\le2-2b<1<2s$ from the assumptions. Applying Lemma \ref{lem3} to $n_1$ and $n_2$ respectively with $\beta=2s$ and $\gamma=2-2b$, \eqref{k22} is finally bounded by $\lgl k\rgl^{2a-4+4b}$ which is finitely bounded when $a\le 2-2b$.

 \

 By considering the cases $(\romannum{1})$, $(\romannum{2})$ and $(\romannum{3})$ in total, 
we see that \eqref{ff3} holds for $a\le 2-2b$ when $s>\frac12$.
In the case when $\frac{b}{2}-\frac14<s\le\frac12$, \eqref{ff3} is obviously proved in the same way as in the case $s>\frac12$. The only difference is that we bound the term $\frac{1}{\langle k+k_2\rangle^{2-2b}}$ by some constant trivially. Indeed, \eqref{ff3} holds for $a\le\min\{2s+1-2b,2-2b\}$ when $\frac{b}{2}-\frac14<s\le\frac12$. We omit the details.
The proof for \eqref{ff3} is complete.

\section*{}
\subsection*{Funding}
This research was supported by the POSCO Science Fellowship of POSCO TJ Park Foundation (S. Kim), and by NRF-2022R1A2C1011312 (I. Seo).

\subsection*{Disclosure statement}
The authors report that there are no competing interests to declare.

\subsection*{Data availability statement}
Data sharing is not applicable to this article as no datasets were generated or analyzed during the current study.

%%%%%%%%%%%%%%%%%%%%%%%%%%%%%%%%%%%%%%%%%%%%%%%%%%%%%%%%%%%%%%%


\begin{thebibliography}{20}
		\bibitem{B} M. V. Berry, \textit{Quantum fractals in boxes}, J. Phys. A: Math. Gen. 29 (1996), 6617--6629.
		
		\bibitem{BK} M. V. Berry and S. Klein, \textit{Integer, fractional and fractal Talbot effects}, J. Modern. Opt. 43 (1996), 2139--2164.
		
		\bibitem{BL} M. V. Berry and Z. V. Lewis,
		\textit{On the Weierstrass-Mandelbrot fractal function}, Proc. R. Soc. Lond. A 370 (1980), 459-484.
		
		\bibitem{BMS} M. V. Berry, I. Marzoli and W. Schleich, \textit{Quantum carpets, carpets of light}, Phys. World 14 (6) (2001), 39--44.

  	\bibitem{ch-ol14} G. Chen and P. J.  Olver,  \textit{Numerical simulation of nonlinear dispersive quantization}, Discrete Contin. Dyn. Syst. 34 (2014), no. 3, 991--1008. 
	
		\bibitem{CET} V. Chousionis, M. B. Erdo\u{g}an and N. Tzirakis, \textit{Fractal solutions of linear and nonlinear dispersive partial differential equations}, Proc. Lond. Math. Soc. (3) 110 (2015), no. 3, 543--564.
		
		\bibitem{DJ} A. Deliu and B. Jawerth, \textit{Geometrical dimension versus smoothness}, Constr. Approx. 8 (1992), no. 2, 211--222.
		
		\bibitem{ES} M. B. Erdo\u{g}an and G. Shakan, \textit{Fractal solutions of dispersive partial differential equations on the torus}, Selecta Math. (N.S.) 25 (2019), no. 1, Paper No. 11.
		
		\bibitem{ET1} M. B. Erdo\u{g}an and N. Tzirakis, \textit{Talbot effect for the cubic non-linear Schr\"odinger equation on the torus}, Math. Res. Lett. 20 (2013), no. 6, 1081--1090.
		
		\bibitem{ET2} M. B. Erdo\u{g}an and N. Tzirakis, \textit{Long time dynamics for forced and weakly damped KdV on the torus}, Commun. Pure Appl. Anal. 12 (2013), no. 6, 2669–2684.
		
		\bibitem{ET3} M. B. Erdo\u{g}an and N. Tzirakis, \textit{Smoothing and global attractors for the Zakharov system on the torus}, Anal. PDE 6 (2013), no. 3, 723--750.
		
		\bibitem{ET-book} M. B. Erdo\u{g}an and N. Tzirakis,  \emph{Dispersive partial differential equations}, London Mathematical Society Student Texts, vol.~86 (2016), Cambridge University Press, Cambridge.
		
		\bibitem{F} K. Falconer, \textit{Fractal geometry: Mathematical foundations and applications}, 3rd ed. (2014), John Wiley and Sons, Ltd., Chichester.
		
		\bibitem{GC} D. Gomila and P. Colet, \textit{Transition from hexagons to optical turbulence}, Phys. Rev. Lett. 68 (2003), 011801.

\bibitem{IMCSMS} R. Iwanow, D. A. May-Arrioja, D. N. Christodoulides, G. I. Stegeman, Y. Min and W. SohlerDiscrete, \textit{Talbot effect in waveguide arrays}, Phys. Rev. Lett. 95 (2005), 053902
        
		\bibitem{KR} L. Kapitanski and I. Rodnianski, \textit{Does a quantum particle know the time?}, Emerging applications of number theory, IMA Volumes in Mathematics and its Applications 109 (1999), Springer, New York, 355--371.
		
		\bibitem{LL} L. A. Lugiato and R. Lefever, \emph{Spatial dissipative structures in passive optical systems}, Phys. Rev. Lett. 58 (1987), no. 21, 2209–2211.
		
		\bibitem{MT} T. Miyaji and Y. Tsutsumi, \emph{Existence of global solutions and global attractor for the third order Lugiato-Lefever equation on $\mathbb{T}$}, Ann. Inst. H. Poincar\'e C Anal. Non Lin\'eaire 34 (2017), no. 7, 1707–1725.
		
		\bibitem{O} K. I. Oskolkov, \textit{A class of I. M. Vinogradov's series and its applications in harmonic analysis}, Progress in approximation theory (Tampa, FL, 1990), Springer Ser. Comput. Math., vol.~19 (1992), Springer, New York, 353--402.

        \bibitem{P} P. Parra-Rivas, D. Gomila, F. Leo, S. Coen and L. Gelens, \textit{Third-order chromatic dispersion stabilizes Kerr frequency combs}, Opt. Lett. 39 (2014), 2971-2974.
		
		\bibitem{R1}L. Rayleigh, \textit{On copying diffraction-gratings, and on some phenomena connected therewith}, Philos. Mag. 11 (1881), 196--205.
		
		\bibitem{R2} I. Rodnianski, \textit{Fractal solutions of the Schr\"{o}dinger equation}, Contemp. Math. 255 (2000), 181--187.
		
		\bibitem{SFTLL} A.J. Scroggie, W.J. Firth, G.S. McDonald, M. Tlidi, R. Lefever and L.A. Lugiato, \textit{Pattern formation in a passive Kerr cavity}, Chaos, Solitons \& Fractals, vol.~4 (1994), Issues 8–9, 1323-1354.
		
		\bibitem{T1} H. F. Talbot, \textit{Facts relating to optical science. No. IV}, Philos. Mag. 9 (1836), 401--407.
		
		\bibitem{T2} T. Tao, \textit{Nonlinear dispersive equations: Local and global analysis}, CBMS Regional Conference Series in Mathematics, vol.~106 (2006), American Mathematical Society, Providence, RI.
		
		\bibitem{T3} M. Taylor, \textit{The Schr\"odinger equation on spheres}, Pacific J. Math. 209 (2003), 145--155.
		
		\bibitem{VM} M. A. Vorontsov and W. B. Miller, \textit{Self-organization in optical systems and applications in information technology}, Springer Series in Synergetics, vol.~66 (1995), Springer, Berlin.

\bibitem{W} T. Wolff, \textit{Lectures on Harmonic Analysis}, University Lecture Series, vol.~29 (2003), American Mathematical Society.

\bibitem{ZDJZL} K. Zhan, L. Dou, R. Jiao, W. Zhang and B. Liu, \textit{Talbot effect in arrays of helical waveguides}, Opt. Lett. 46 (2021), 322-325.

\bibitem{ZDKL} K. Zhan, L. Dou, X. Kang and B. Liu, \textit{Controllable discrete Talbot self-imaging effect in Hermitian and non-Hermitian Floquet superlattices}, Opt. Express 2022, 30 (2022), 35256--35269.

\bibitem{ZZJL} K. Zhan, W. Zhang, R. Jiao and B. Liu, \textit{Period-reversal accelerating self-imaging and multi-beams interference based on accelerating beams in parabolic optical potentials}, Opt. Express 28 (2020), 20007--20015.
  
        \bibitem{ZWZX} Y. Zhang, J. Wen, S. N. Zhu and M. Xiao, \textit{Nonlinear Talbot effect}, Phys. Rev. Lett. 104 (2010), 183901. 
		
	\end{thebibliography}
\end{document}